\newtheorem{thm}{Theorem}
\newtheorem{cor}{Corollary}
\newtheorem{lemma}[thm]{Lemma}
\newtheorem{prop}{Proposition}
\newcommand{\ie}{i.e.~}
\DeclareMathOperator{\F}{\mathbb{F}}
\DeclareMathOperator{\Tr}{Tr}
\begin{document}
		\baselineskip=16.3pt
		\parskip=14pt
		\begin{center}
			\section*{The Number of Irreducible Polynomials with the First Two Coefficients Fixed over Finite Fields  of Odd Characteristic}
			
			{\large 
				Gary McGuire\footnote{Research supported by Science Foundation Ireland Grant 13/IA/1914} and
				Emrah Sercan Y{\i}lmaz \footnote {Research supported by Science Foundation Ireland Grant 13/IA/1914} 
				\\
				School of Mathematics and Statistics\\
				University College Dublin\\
				Ireland}
		\end{center}
		\subsection*{Abstract}
In this paper we give a different proof of Kuz'min's result on the number of irreducible polynomials  with the first two coefficients fixed. Our technique is to relate the question to the number of points on a curve, and to calculate the L-polynomial of the curve.

\section{Introduction}\label{sec:intro}

For $p$ a prime and $r \ge 1$ let $\mathbb F_q$ denote the finite field of $q = p^r$ elements. We count the number of monic irreducible polynomials over $\F_q$ of degree $n$ for which the first two coefficients 
have the prescribed values $t_1,t_2$, while the remaining coefficients are arbitrary, \ie 
considering irreducible polynomials of the form
\[
x^n + t_1 x^{n-1}  + t_2 x^{n-2}+ \cdots + a_{n-1}x + a_n,
\]
for fixed $t_1$, $t_2$. Denote the number of such polynomials by $I_q(n,t_1,t_2)$. 

For $a \in \mathbb F_{q^n}$, the characteristic polynomial of $a$ with respect to the extension $\mathbb F_{q^n}/\mathbb F_q$ is defined to be: 
\begin{equation}\label{characteristicpoly}
	\prod_{i = 0}^{n-1} (x - a^{q^i}) = x^n - a_{n-1}x^{n-1} + \cdots +(-1)^{n-1}a_1x +(-1)^{n} a_0.
\end{equation}
The coefficient $a_{n-1}$ is known as the trace of $a$, and  $a_{n-2}$ is known as the subtrace of $a$. We are concerned with the first two traces, 
so we let $T_1(a)=a_{n-1}$ and $T_2(a)=a_{n-2}$.
By~(\ref{characteristicpoly}) they are given by the following expressions:
\begin{eqnarray}
	\nonumber T_1(a) = \sum_{i = 0}^{n-1} a^{q^i}\;\;\; \text{ and } \;\;\; T_2(a) = \sum_{0 \le i < j \le n-1} a^{q^i + q^j}.
\end{eqnarray}
For $t_1,t_2 \in \mathbb F_q$ let $F_q(n,t_1,t_2)$ be the number of elements $a \in \mathbb F_{q^n}$ for which
$T_1(a) = t_1$ and $T_2(a) = t_2$.
In the next section we will show that $F_q(n,0,0)$ determines $I_q(n,0,0)$.
In Section \ref{t1t2}) we will show that it is enough to find $I_q(n,0,0)$ in order to find $I_q(n,t_1,t_2)$ for all $t_1,t_2\in \mathbb F_q$.

The rest of this paper is  organized as follows.
In Section \ref{sectrel} we relate $F_q(n,0,0)$ to a curve $C$ defined over $\F_q$.
In Section \ref{backg} we give the background for the proof of the main result, and Section
\ref{sectzeta} contains the proof. The proof is the calculation of the
L-polynomial of $C$.

The result of this paper was first proved by Kuz'min \cite{kuzmin1} using different methods.
Reference ~\cite{three-coef} concerns the case of the first three coefficients fixed, in even characteristic.  We conclude the paper with comments about this case in odd characteristic. Further results on higher numbers of coefficients will appear in Granger \cite{higher-no-coef}.

\section{Computing $I_q(n,0,0)$ from $F_q(n,0,0)$}\label{sec:irreduciblecounts}

 In this section we express $I_q(n,0,0)$ in terms of $F_q(n,0,0)$, using simple extensions of results from~\cite{cattell} and~\cite{yucasmullen}.  Let $\mu(\cdot)$ be the M\"obius function, and for a proposition $S$ let 
$[S]$ denote its truth value, \ie $[S] = 1$ if $S$ is true and $0$ if $S$ is false, with $1$ and $0$ interpreted as integers. The 
formula is given by the following theorem.
\begin{thm}\label{thm:irreducibles}
	Let $n \ge 2$. Then
	\begin{eqnarray}
		\label{thm91} I_q(n,0,0) &=& \frac{1}{n} \sum_{\substack{d \mid n, \:p \nmid d}} \mu(d) \big( F_q(n/d,0,0) - [p \text{ divides } n]q^{n/pd} \big)
	\end{eqnarray}
\end{thm}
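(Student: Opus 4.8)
The plan is to run the classical trace-enumeration argument (as in \cite{cattell,yucasmullen}), extended from one fixed coefficient to two, and then finish by Möbius inversion. First I would record the elementary dictionary between irreducible polynomials and field elements: for $m\mid n$, a monic irreducible $f$ of degree $m$ over $\F_q$ has exactly $m$ roots in $\F_{q^n}$, all of degree exactly $m$, and $f$ is the characteristic polynomial (equivalently, minimal polynomial) of each such root with respect to $\F_{q^m}/\F_q$; conversely the minimal polynomial of a degree-$m$ element is such an $f$. By~(\ref{characteristicpoly}) the first two coefficients of $f$ equal $-T_1(a)$ and $T_2(a)$ computed in $\F_{q^m}$, so they both vanish exactly when $T_1(a)=T_2(a)=0$. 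Hence $m\,I_q(m,0,0)$ is the number of $a\in\F_{q^m}$ of degree exactly $m$ over $\F_q$ whose two traces relative to $\F_{q^m}/\F_q$ are zero.

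Next I would carry out the one genuinely new computation. If $a\in\F_{q^n}$ has degree $m$ over $\F_q$ and $d=n/m$, then, because the Frobenius conjugates of $a$ repeat with period $m$, the characteristic polynomial of $a$ with respect to $\F_{q^n}/\F_q$ is $g(x)^d$, where $g(x)=x^m+b_1x^{m-1}+b_2x^{m-2}+\cdots$ is the minimal polynomial of $a$. Writing $g(x)^d=x^{md}\bigl(1+(b_1x^{-1}+b_2x^{-2}+\cdots)\bigr)^d$ and expanding, the coefficients of $x^{n-1}$ and $x^{n-2}$ in $g(x)^d$ are $d\,b_1$ and $d\,b_2+\binom d2 b_1^2$. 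Thus $T_1(a)=T_2(a)=0$ (traces now relative to $\F_{q^n}/\F_q$) is equivalent to $d\,b_1=0$ and $d\,b_2+\binom d2 b_1^2=0$. I then split on $d$: if $p\nmid d$ this forces $b_1=b_2=0$, so $a$ is counted by $m\,I_q(n/d,0,0)$ as in the previous paragraph; if $p\mid d$, then since $p$ is odd we have $d\equiv 0$ and $\binom d2\equiv 0\pmod p$, so both conditions hold automatically and \emph{every} degree-$m$ element is counted.

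Summing the partition of $\F_{q^n}$ by the degree $m=n/d$ of its elements, this gives
$$F_q(n,0,0)=\sum_{\substack{d\mid n\\ p\nmid d}}\frac nd\,I_q\!\Bigl(\frac nd,0,0\Bigr)+\bigl[p\mid n\bigr]\,q^{n/p},$$
since the divisors $m\mid n$ with $p\mid(n/m)$ are precisely those with $m\mid n/p$, and the degree-$m$ elements over all such $m$ together exhaust $\F_{q^{n/p}}$, contributing $q^{n/p}$. Finally I would move the $q^{n/p}$ term to the left-hand side, substitute $k=n/d$ so that $d$ ranges over the $p$-free divisors of $n$ while $k$ ranges over the divisors of $n$ with $v_p(k)=v_p(n)$, and apply Möbius inversion on the $p'$-part of $n$: writing $n=p^a n'$ with $p\nmid n'$, the relation becomes an ordinary divisor sum over $n'$. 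Since $p\nmid d$ gives $[p\mid(n/d)]=[p\mid n]$ and $(n/d)/p=n/(pd)$, inversion produces exactly \eqref{thm91}. A little care is needed for the degenerate terms $n/d\in\{1,2\}$, where the ``subtrace'' is an empty sum, but the natural conventions make these consistent.

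I expect the main obstacle to be the coefficient computation for $g(x)^d$ together with the case analysis on $d$: getting the $\binom d2 b_1^2$ term right and seeing precisely why odd characteristic makes the $p\mid d$ case collapse — it genuinely fails for $p=2$ — and then threading the $p$-free divisor condition carefully through the Möbius inversion so that the correction term emerges as $q^{n/pd}$ with the indicator $[p\mid n]$ exactly as stated.
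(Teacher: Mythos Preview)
Your proposal is correct and follows essentially the same approach as the paper: relate $T_1,T_2$ of an element to the first two coefficients of $P^d$ via the power-of-minimal-polynomial computation, split the divisor sum on $p\mid d$ versus $p\nmid d$, sum the $p\mid d$ contributions to $q^{n/p}$, and finish with Möbius inversion restricted to $p$-free divisors. Your coefficient formula $d\,b_2+\binom d2 b_1^{\,2}$ is in fact the correct one (the paper's stated lemma drops the square on $T_1(P)$, which looks like a typo but is harmless since the case split only uses whether $d$ and $\binom d2$ vanish mod $p$).
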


For $\beta \in \mathbb F_{q^n}$ let $P = \text{Min}(\beta)$ denote the minimum polynomial of $\beta$ over $\mathbb F_q$, which has degree $n/d$ for some $d \mid n$.
Note that $T_i(\beta)$ is the coefficient of $x^{n-i}$ in $P^d$~\cite[Lemma 2]{cattell}, and so abusing notation we also write $T_i(\beta)$ as 
$T_i(P^d)$.
\begin{lemma}
	For each integer $d \ge 1$ and $P(x) \in \mathbb F_q[x]$,
	\begin{enumerate}
		\item $T_1(P^d) = d T_1(P)$
		\item $T_2(P^d) = \binom{d}{2} T_1(P) + d T_2(P)$
	\end{enumerate}
\end{lemma}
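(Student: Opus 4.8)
The plan is to work with the roots of $P$ over an algebraic closure $\overline{\F_q}$ and to interpret $T_1$ and $T_2$ as the first and second elementary symmetric functions of those roots; this is exactly what (\ref{characteristicpoly}) says when the polynomial in question is a characteristic polynomial, and it is the natural meaning of $T_1(Q),T_2(Q)$ for an arbitrary monic $Q\in\F_q[x]$. Since only the roots of $P$ enter the statement, I would assume $P$ monic, write $P(x)=\prod_{i=1}^{m}(x-\alpha_i)$ with $m=\deg P$, and note that $P(x)^d=\prod_{i=1}^{m}(x-\alpha_i)^d$ has root multiset consisting of each $\alpha_i$ taken with multiplicity $d$. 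Set $e_1=\sum_i\alpha_i=T_1(P)$, $e_2=\sum_{i<j}\alpha_i\alpha_j=T_2(P)$, and $p_2=\sum_i\alpha_i^2$.

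Part (1) is then immediate: $T_1(P^d)$ is the sum of the roots of $P^d$, namely $\sum_{i=1}^{m}d\,\alpha_i=d\,T_1(P)$. For part (2), $T_2(P^d)$ is the sum of $\alpha\beta$ over all unordered pairs of distinct slots in the root multiset of $P^d$, and I would split this sum according to whether the two slots come from the same index $i$ or from two different indices: the same-index pairs contribute $\binom{d}{2}\alpha_i^2$ for each $i$, while a pair of distinct indices $i<j$ contributes $d\cdot d\cdot\alpha_i\alpha_j$. Hence
\[
T_2(P^d)=\binom{d}{2}\sum_{i=1}^{m}\alpha_i^2+d^2\!\!\sum_{1\le i<j\le m}\!\!\alpha_i\alpha_j=\binom{d}{2}\,p_2+d^2\,e_2.
\]
Now I would invoke the Newton identity $p_2=e_1^2-2e_2$, i.e. $p_2=T_1(P)^2-2T_2(P)$, and simplify the coefficient of $T_2(P)$ using $d^2-2\binom{d}{2}=d^2-d(d-1)=d$, which gives $T_2(P^d)=\binom{d}{2}T_1(P)^2+d\,T_2(P)$, as in (2).

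An equivalent route, which I might prefer for its brevity, is induction on $d$: the root-multiset description yields the product rules $T_1(QR)=T_1(Q)+T_1(R)$ and $T_2(QR)=T_2(Q)+T_2(R)+T_1(Q)T_1(R)$ for monic $Q,R$, and then (1) and (2) follow by induction, using $\binom{d-1}{2}+(d-1)=\binom{d}{2}$ to collect the $T_1(P)^2$ terms. I do not expect a genuine obstacle here; the only points that need care are fixing the normalization so that $T_i$ of a polynomial coincides with the $i$-th elementary symmetric function of its roots (so that (\ref{characteristicpoly}) applies), and getting the multiplicities right in the pair-count for $T_2$ — in particular the asymmetry between the weight $\binom{d}{2}$ on same-root pairs and the weight $d^2$ on distinct-root pairs.
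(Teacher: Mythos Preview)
Your argument is correct and complete. The paper actually states this lemma without proof, so there is no authors' argument to compare against. One point worth noting: what you prove in part~(2), namely
\[
T_2(P^d)=\binom{d}{2}\,T_1(P)^{2}+d\,T_2(P),
\]
has $T_1(P)^2$, whereas the lemma as printed has $T_1(P)$; your computation shows the square is the correct version (e.g.\ already for $P(x)=x-a$ one gets $T_2(P^d)=\binom{d}{2}a^2$). This slip does not affect the paper's use of the lemma in the proof of Theorem~\ref{thm:irreducibles}: when $p\nmid d$ the condition $d\,T_1(P)=0$ forces $T_1(P)=0$, so $T_1(P)$ and $T_1(P)^2$ agree; and when $p\mid d$ with $p$ odd, $\binom{d}{2}=d(d-1)/2\equiv 0\pmod p$, so the term vanishes regardless. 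Both routes you sketch---the direct root-multiset count together with Newton's identity $p_2=e_1^2-2e_2$, and the induction via the product rules $T_1(QR)=T_1(Q)+T_1(R)$ and $T_2(QR)=T_2(Q)+T_2(R)+T_1(Q)T_1(R)$---are clean and standard.
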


\begin{prop} Suppose that $F$ and $f$ are functions on numbers. Then
	$$F(n)=\sum\limits_{d\mid n, \; p \nmid d	}f(n/d) \;\;\;\text{ if and only if } \;\;\; f(n)=\sum\limits_{d\mid n, \; p \nmid d	}\mu(n)F(n/d).$$
\end{prop}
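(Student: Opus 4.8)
The plan is to deduce both implications from the classical Möbius inversion formula on the divisor lattice, the only extra ingredient being that the condition ``$p \nmid d$'' is compatible with factorization because $p$ is prime: for positive integers $d,e$ one has $p \nmid de$ if and only if $p \nmid d$ and $p \nmid e$. Equivalently, writing $n = p^a m$ with $\gcd(p,m)=1$, the $d \mid n$ with $p \nmid d$ are exactly the divisors of $m$, so every sum in the statement is really a sum over the divisors of a prime-to-$p$ number, to which the identity $\sum_{d \mid k}\mu(d) = [k=1]$ applies.

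For the forward direction I would assume $F(n) = \sum_{d \mid n,\ p \nmid d} f(n/d)$, substitute this expansion into $\sum_{d \mid n,\ p \nmid d}\mu(d)F(n/d)$, and interchange the order of summation. Reindexing by $k = de$, the crucial point is that for a fixed $k \mid n$ with $p \nmid k$ the admissible pairs $(d,e)$ with $de = k$ are exactly $d \mid k$, $e = k/d$ — the restrictions $p \nmid d$ and $p \nmid e$ imposing nothing beyond $p \nmid k$, by the observation above. One is then left with
\[
\sum_{\substack{k \mid n \\ p \nmid k}} f(n/k) \sum_{d \mid k}\mu(d),
\]
and since the inner sum equals $[k=1]$ this collapses to $f(n)$. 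The reverse direction is symmetric: assuming the second identity and performing the same substitution and reindexing, one reaches $\sum_{k \mid n,\ p \nmid k} F(n/k)\sum_{e \mid k}\mu(e) = F(n)$.

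The step I expect to need the most care is precisely the reindexing $k = de$: one must be sure that the outer and inner prime-to-$p$ constraints recombine into exactly ``$p \nmid k$'', so that the inner Möbius sum runs over \emph{all} divisors of $k$ and the identity $\sum_{d \mid k}\mu(d) = [k=1]$ is applicable. This is the content of the ``$p$ prime'' remark, and it is really the whole point of the proposition; everything else is routine bookkeeping. As an alternative packaging, I could fix $n = p^a m$ with $p \nmid m$, set $\widehat F(j) = F(p^a j)$ and $\widehat f(j) = f(p^a j)$ for $j \mid m$, and observe that the two identities in the statement read $\widehat F(m) = \sum_{e \mid m}\widehat f(m/e)$ and $\widehat f(m) = \sum_{e \mid m}\mu(e)\widehat F(m/e)$, which are equivalent by standard Möbius inversion on the divisor lattice of $m$.
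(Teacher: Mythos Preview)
Your argument is correct: the key point is that the divisors $d\mid n$ with $p\nmid d$ are exactly the divisors of the $p$-free part $m$ of $n$, so both sums are ordinary divisor sums over $m$ and classical M\"obius inversion applies. The reindexing step is handled cleanly, and your alternative packaging via $\widehat F(j)=F(p^aj)$, $\widehat f(j)=f(p^aj)$ makes the reduction to standard M\"obius inversion especially transparent.

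There is nothing to compare against: the paper states this proposition without proof, treating it as a known variant of M\"obius inversion and immediately moving on to the proof of Theorem~\ref{thm:irreducibles}. Your write-up would serve perfectly well as the omitted justification. One incidental remark: the ``$\mu(n)$'' in the displayed statement is evidently a typo for ``$\mu(d)$'', and you have (rightly) read it that way.
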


\begin{proof}(\textit{Proof of Theorem \ref{thm:irreducibles}}) We have 
	\begin{align*}
		\hspace{-0.8cm}F_q(n,0,0)&=\left|\bigcup_{\beta \in \mathbb F_{q^n}, \: \Tr_1(\beta)=0, \Tr_2(\beta)=0} \text{Min}(\beta)\right|\\
		&=\left|\bigcup_{d\mid n}\frac nd \left\{P\in \text{Irr}\left(\frac nd\right) : d\Tr_1(P)=0,\: \binom d2\Tr_1(P)+d\Tr_2(P)=0 \right\}\right|\\
		&=[p \text{ divides } n]\left|\bigcup_{d\mid n, \; p \mid d}\frac nd \left\{P\in \text{Irr}\left(\frac nd\right) \right\}\right| \\
		&\qquad +\left|\bigcup_{d\mid n, \: p \nmid d}\frac nd \left\{P\in \text{Irr}\left(\frac nd\right) : \Tr_1(P)=0,\: \Tr_2(P)=0 \right\}\right|\\
		&=[p \text{ divides } n]\sum\limits_{d\mid n, \: p \mid d}\frac nd I_q\left(\frac nd\right)
		+\sum\limits_{d\mid n, \: p\nmid d}\frac nd I_q\left(\frac nd,0,0\right)\\&=[p \text{ divides } n]q^{n/p}+\sum\limits_{d\mid n, \: p\nmid d}\frac nd I_q\left(\frac nd,0,0\right).
	\end{align*}	 Therefore, $$I_q(n,0,0)=\frac1n\sum\limits_{d\mid n, \: p\nmid d}\left(F(n/d,0,0)-[p \text{ divides } n] q^{n/pd}\right).$$
\end{proof}

\section{Relation between $F_q(n,0,0)$ and the curve $y^q-y=x^{q+1}-x^2$}\label{sectrel}

In this section we will prove Lemma \ref{lem-T2a+b} and Lemma \ref{lem-F+curve} in order to obtain the relation  between $F_q(n,0,0)$ and the curve $y^q-y=x^{q+1}-x^2$. This relation is stated in Proposition \ref{prop-relation}.

\begin{lemma}\label{lem-T2a+b}
	For all $\alpha, \beta \in \mathbb F_{q^n}$, we have
	$$T_2(\alpha+\beta)=T_2(\alpha)+T_2(\beta)+T_1(\alpha)T_1(\beta)-T_1(\alpha\beta).$$
\end{lemma}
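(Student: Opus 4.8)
The plan is to expand $T_2(\alpha+\beta)$ directly from its definition as a sum over pairs $0 \le i < j \le n-1$ and sort the resulting terms by how many of each of $\alpha, \beta$ appear. Write $\sigma$ for the $q$-power Frobenius, so that $T_1(\gamma) = \sum_i \gamma^{q^i}$ and $T_2(\gamma) = \sum_{i<j}\gamma^{q^i+q^j}$. Since $(\alpha+\beta)^{q^i+q^j} = (\alpha^{q^i}+\beta^{q^i})(\alpha^{q^j}+\beta^{q^j})$, expanding gives four kinds of terms: $\alpha^{q^i}\alpha^{q^j}$, $\beta^{q^i}\beta^{q^j}$, $\alpha^{q^i}\beta^{q^j}$, and $\beta^{q^i}\alpha^{q^j}$. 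Summing the first over all $i<j$ yields $T_2(\alpha)$, the second yields $T_2(\beta)$, so the content of the lemma is the identity
\[
\sum_{0 \le i < j \le n-1}\bigl(\alpha^{q^i}\beta^{q^j}+\beta^{q^i}\alpha^{q^j}\bigr) = T_1(\alpha)T_1(\beta)-T_1(\alpha\beta).
\]

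For that, I would note that the left-hand sum ranges over all ordered pairs $(i,j)$ with $i \ne j$ of $\alpha^{q^i}\beta^{q^j}$ (the two summands together, as $i<j$ plus $j<i$, exhaust $i\ne j$), i.e. it equals $\sum_{i \ne j}\alpha^{q^i}\beta^{q^j}$. Now $T_1(\alpha)T_1(\beta) = \bigl(\sum_i \alpha^{q^i}\bigr)\bigl(\sum_j \beta^{q^j}\bigr) = \sum_{i,j}\alpha^{q^i}\beta^{q^j}$, and splitting off the diagonal $i=j$ gives $\sum_{i\ne j}\alpha^{q^i}\beta^{q^j} + \sum_i \alpha^{q^i}\beta^{q^i} = \sum_{i\ne j}\alpha^{q^i}\beta^{q^j} + \sum_i (\alpha\beta)^{q^i}$. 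The last sum is exactly $T_1(\alpha\beta)$, so rearranging yields the claimed identity, and combining with the $T_2(\alpha)$ and $T_2(\beta)$ pieces completes the proof.

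There is essentially no obstacle here; the only thing to be careful about is the bookkeeping that converts the constrained sum over $i<j$ of a symmetric-looking pair of monomials into an unconstrained sum over $i \ne j$, and then recognizing the diagonal contribution as $T_1$ of the product. I would present this as a short chain of displayed equalities rather than a case analysis, keeping all indices running over $\{0,1,\dots,n-1\}$ throughout.
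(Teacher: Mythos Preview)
Your argument is correct and is essentially identical to the paper's proof: both expand $(\alpha+\beta)^{q^i+q^j}$, identify the pure-$\alpha$ and pure-$\beta$ sums as $T_2(\alpha)$ and $T_2(\beta)$, and rewrite the mixed sum $\sum_{i<j}(\alpha^{q^i}\beta^{q^j}+\beta^{q^i}\alpha^{q^j})$ as $T_1(\alpha)T_1(\beta)-T_1(\alpha\beta)$ by recognizing it as the full double sum minus the diagonal.
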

\begin{proof} We have
	\begin{align*}
	T_2(\alpha+\beta) &= \sum_{0 \le i < j \le n-1} (\alpha+\beta)^{q^i + q^j}\\&=\sum_{0 \le i < j \le n-1}\alpha^{q^i+q^j}+\sum_{0 \le i < j \le n-1}\beta^{q^i+q^j}+\sum_{0 \le i < j \le n-1}(\alpha^{q^i}\beta^{q^j}+\beta^{q^i}\alpha^{q^j})\\&=T_2(\alpha)+T_2(\beta)+\sum_{0 \le i\le n-1}\alpha^{q^i}\sum_{0 \le j \le n-1}\beta^{q^j}-\sum_{0 \le k \le n-1}(\alpha\beta)^{q^k}\\&=T_2(\alpha)+T_2(\beta)+T_1(\alpha)T_1(\beta)-T_1(\alpha\beta).
	\end{align*}
\end{proof}
\begin{lemma} \label{lem-F+curve}
	For all $c \in \mathbb F_{q^n}$ we have $T_2(c^q-c)=T_1(c^{q+1}-c^2)$.
\end{lemma}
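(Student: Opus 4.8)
The plan is to deduce the identity from Lemma \ref{lem-T2a+b} by a well-chosen specialization, together with an elementary Newton-type relation between $T_1$ and $T_2$. The key move is to apply Lemma \ref{lem-T2a+b} with $\alpha = c^q$ and $\beta = -c$; since $\alpha + \beta = c^q - c$ and $\alpha\beta = -c^{q+1}$, this yields
$$T_2(c^q - c) = T_2(c^q) + T_2(-c) + T_1(c^q)\,T_1(-c) - T_1(-c^{q+1}).$$

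I would then simplify the right-hand side term by term. The Frobenius map $x \mapsto x^q$ cyclically permutes the conjugates $c, c^q, \dots, c^{q^{n-1}}$ of $c$, so it fixes every symmetric function of them; in particular $T_1(c^q) = T_1(c)$ and $T_2(c^q) = T_2(c)$. Because $q$ is odd, each exponent $q^i + q^j$ occurring in the definition of $T_2$ is even, whence $T_2(-c) = T_2(c)$; and since $T_1$ is $\mathbb{F}_q$-linear we have $T_1(-c) = -T_1(c)$ and $T_1(-c^{q+1}) = -T_1(c^{q+1})$. Substituting all of this gives
$$T_2(c^q - c) = 2\,T_2(c) - T_1(c)^2 + T_1(c^{q+1}).$$

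To conclude I would invoke the identity $T_1(c)^2 = T_1(c^2) + 2\,T_2(c)$, which falls out immediately upon squaring $T_1(c) = \sum_{i=0}^{n-1} c^{q^i}$: the diagonal terms contribute $\sum_i (c^2)^{q^i} = T_1(c^2)$ and the off-diagonal terms contribute $2\sum_{0 \le i < j \le n-1} c^{q^i + q^j} = 2\,T_2(c)$. Replacing $2\,T_2(c) - T_1(c)^2$ by $-T_1(c^2)$ in the last display and using linearity of $T_1$ one final time yields
$$T_2(c^q - c) = T_1(c^{q+1}) - T_1(c^2) = T_1(c^{q+1} - c^2),$$
as desired.

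I do not expect a real obstacle here: once one thinks to substitute $\alpha = c^q$, $\beta = -c$ into Lemma \ref{lem-T2a+b}, every subsequent step is essentially forced, and the only point where the standing hypothesis of odd characteristic is (mildly) used is in the reduction $T_2(-c) = T_2(c)$. One could alternatively prove the lemma by expanding $T_2(c^q - c) = \sum_{0 \le i < j \le n-1}(c^{q^{i+1}} - c^{q^i})(c^{q^{j+1}} - c^{q^j})$ directly and regrouping the four resulting sums, but this is messier than the route above and I would not pursue it.
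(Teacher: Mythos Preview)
Your proof is correct and follows exactly the paper's route: the same specialization $\alpha = c^q$, $\beta = -c$ in Lemma~\ref{lem-T2a+b}, then the same Newton identity $T_1(c)^2 = T_1(c^2) + 2T_2(c)$. One minor aside: the step $T_2(-c)=T_2(c)$ does not actually require odd characteristic, since in characteristic~$2$ one has $-c=c$ and the identity is trivial---the paper in fact notes just after Proposition~\ref{prop-relation} that this lemma holds for all $q$.
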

\begin{proof}
	Let $\alpha=c^q$ and $\beta=-c$ in Lemma \ref{lem-T2a+b}. Then we have $T_2(c^q-c)=2T_2(c)-T_1(c)^2+T_1(c^{q+1})$. Moreover, we have $$2T_2(c)=2\sum_{0 \le i < j \le n-1}c^{q^i+q^j}=\sum_{0 \le i \le n-1}c^{q^i}\cdot\sum_{0 \le j \le n-1}c^{q^j}-\sum_{0 \le k \le n-1}(c\cdot c)^{q^k}=T_1(c)^2-T(c^2).$$ Therefore, $T_2(c^q-c)=-T_1(c^2)+T_1(c^{q+1})=T_1(c^{q+1}-c^2)$.
\end{proof}

Here is the important relation to the curve.

\begin{prop}\label{prop-relation}
	We have $F_q(n,0,0)=\frac1q \left\{ x \in \mathbb F_{q^n} : T_1(x^{q+1}-x^2)=0\right\}$.
\end{prop}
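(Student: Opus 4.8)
The plan is to count the set $S=\{x\in\mathbb F_{q^n}: T_1(x^{q+1}-x^2)=0\}$ by relating it, via Lemma \ref{lem-F+curve}, to the set whose size is $q\cdot F_q(n,0,0)$, namely $\{c\in\mathbb F_{q^n}: T_2(c^q-c)=0 \text{ and }T_1(c^q-c)=0\}$. First I would observe that the map $\varphi\colon c\mapsto c^q-c$ is an $\mathbb F_q$-linear (in fact additive) endomorphism of $\mathbb F_{q^n}$ whose kernel is exactly $\mathbb F_q$, so $\varphi$ is $q$-to-$1$ onto its image $\operatorname{im}\varphi=\{x\in\mathbb F_{q^n}: T_1(x)=0\}$ (the image is contained in the trace-zero hyperplane since $T_1(c^q-c)=T_1(c)^q-T_1(c)=0$, and a dimension/cardinality count gives equality).

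The key step is then to show that for $x$ in the trace-zero hyperplane, the condition $T_1(x^{q+1}-x^2)=0$ is equivalent to requiring additionally $T_2(x)=0$, so that $S$ corresponds exactly to the preimage under $\varphi$ of $\{x: T_1(x)=0,\ T_2(x)=0\}$. By Lemma \ref{lem-F+curve}, for any $c$ we have $T_1(c^{q+1}-c^2)=T_2(c^q-c)$; so if $x=c^q-c=\varphi(c)$ then $T_1(\varphi(c)^{q+1}-\varphi(c)^2)$ equals $T_2(\varphi(\varphi(c)))$ — I would instead argue more directly: for $x\in\operatorname{im}\varphi$, write $x=c^q-c$; then $T_1(x^{q+1}-x^2)=T_1((c^q-c)^{q+1}-(c^q-c)^2)$, and I want to identify this with $T_2$ of something. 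Actually the cleanest route is: Lemma \ref{lem-F+curve} applied with $c$ replaced by an element $d$ with $d^q-d=x$ — but such a $d$ need not exist. So instead I would prove the self-contained statement that $T_1(x^{q+1}-x^2)=T_2(x^q-x)$ holds for \emph{all} $x$ (which is exactly Lemma \ref{lem-F+curve} with the dummy variable renamed), and separately that $x\mapsto T_2(x)$ on the trace-zero hyperplane, composed with $\varphi$, recovers $T_2(x^q-x)$; more precisely, I would show $T_2(\varphi(x))=T_1(x^{q+1}-x^2)$ and $T_1(\varphi(x))=0$, and that as $x$ ranges over a set of coset representatives, $\varphi(x)$ ranges over the trace-zero hyperplane bijectively, with $F_q(n,0,0)$ counting preimages.

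Concretely, the argument I would write is: the set $\{a\in\mathbb F_{q^n}: T_1(a)=0,\ T_2(a)=0\}$ counted by $F_q(n,0,0)$ lies inside $\operatorname{im}\varphi$ (since $T_1(a)=0$), so each such $a$ has exactly $q$ preimages $c$ under $\varphi$; thus $|\{c: T_1(c^q-c)=0,\ T_2(c^q-c)=0\}| = q\,F_q(n,0,0)$. But $T_1(c^q-c)=0$ automatically, and by Lemma \ref{lem-F+curve} $T_2(c^q-c)=T_1(c^{q+1}-c^2)$, so this set equals $\{c: T_1(c^{q+1}-c^2)=0\}$, which is $S$ with variable renamed. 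Hence $|S|=q\,F_q(n,0,0)$, i.e. $F_q(n,0,0)=\frac1q|S|$, as claimed. The main obstacle — really the only subtle point — is verifying that every trace-zero element is of the form $c^q-c$ (additive Hilbert 90 for the extension $\mathbb F_{q^n}/\mathbb F_q$, which I would prove by the standard cardinality argument: $\varphi$ has kernel $\mathbb F_q$, hence image of size $q^{n-1}$, which is precisely the size of the trace-zero hyperplane containing it) and keeping careful track of the factor $q$ coming from $|\ker\varphi|=q$.
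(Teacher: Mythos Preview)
Your proposal is correct and, once you reach the paragraph beginning ``Concretely,'' it is exactly the paper's argument: use that $c\mapsto c^q-c$ is $q$-to-$1$ onto the trace-zero elements (additive Hilbert~90, verified by the cardinality count you give), observe $T_1(c^q-c)=0$ is automatic, and invoke Lemma~\ref{lem-F+curve} to rewrite $T_2(c^q-c)=0$ as $T_1(c^{q+1}-c^2)=0$. The exploratory detours before that paragraph (e.g.\ trying to write $x=d^q-d$ for a new $d$) are unnecessary and can be deleted.
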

\begin{proof}
	Let $y\in \mathbb F_{q^n}$. Then $T_1(y)=0$ if and only if there exists  $x \in \mathbb F_{q^n}$ such that $y=x^q-x$. Moreover, by Lemma \ref{lem-F+curve}, we have $T_2(y)=T_2(x^q-x)=T_1(x^{q+1}-x^2)$. Since the map $x \to x^q-x$ is a $q$-to-$1$ map, we have $F_q(n,0,0)=\frac1q \left\{ x \in \mathbb F_{q^n} : T_1(x^{q+1}-x^2)=0\right\}$.
\end{proof}

Up to here, the results are valid in odd or even characteristic. We have done the even characteristic case  in \cite{three-coef}. For the rest of this paper, $q$ will be odd.

\section{Background}\label{backg}

\subsection{Quadratic forms}\label{QF}

We now recall the basic theory of quadratic forms over $\mathbb{F}_{q}$, where $q$ is odd.

Let $K=\mathbb{F}_{q^n}$, and 
let $Q:K\longrightarrow \mathbb{F}_{q}$ be a quadratic form.
The polarization of $Q$ is the symplectic bilinear form $B$ defined by $B(x,y)=Q(x+y)-Q(x)-Q(y)$. By definition the radical of $B$ (denoted $W$) is $
W =\{ x\in K : B(x,y)=0 \text{  for all $y\in K$}\}$. The rank of $B$ is defined to be $n-\dim(W)$. The rank of $Q$ is defined to be the rank of $B$.

The following result is well known, see Chapter $6$ of \cite{lidl}  for example.
\bigskip

\begin{prop}\label{counts}
	Continue the above notation. Let $N=|\{x\in K : Q(x)=0\}|$, and let $w=\dim(W)$. If $Q$ has odd rank then $N=q^{n-1}$; if $Q$ has even rank then $N=q^{n-1}\pm (q-1)q^{(n-2+w)/2}$.
\end{prop}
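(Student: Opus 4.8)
The plan is to reduce the statement to the well-known classification of quadratic forms over $\mathbb{F}_q$ in odd characteristic, treating $Q$ as a quadratic form on $K$ viewed as an $n$-dimensional $\mathbb{F}_q$-vector space. First I would pick an $\mathbb{F}_q$-basis of $K$ and write $Q$ as a quadratic polynomial in the $n$ coordinates; since $q$ is odd, $Q$ is equivalent under a change of basis to a diagonal form $\sum_{i=1}^{m} c_i z_i^2$ where $m$ is the rank and $c_i \in \mathbb{F}_q^{\times}$. The radical $W$ corresponds exactly to the variables $z_{m+1}, \dots, z_n$ on which $Q$ does not depend, so $w = \dim(W) = n - m$ and the count of zeros satisfies $N = q^{w} \cdot N_0$, where $N_0 = |\{z \in \mathbb{F}_q^m : \sum c_i z_i^2 = 0\}|$.

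Next I would invoke (or reprove) the standard formula for the number of solutions of a nondegenerate diagonal quadratic form in $m$ variables over $\mathbb{F}_q$. When $m$ is odd, this count is exactly $q^{m-1}$, independent of the $c_i$; when $m$ is even, the count is $q^{m-1} + v(\eta)\,(q-1)\,q^{(m-2)/2}$, where $v$ records whether the discriminant is a square, so the count takes the two values $q^{m-1} \pm (q-1)q^{(m-2)/2}$. Multiplying by $q^w$ then gives $N = q^{n-1}$ in the odd-rank case, and $N = q^{n-1} \pm (q-1)q^{(m-2)/2 + w}$ in the even-rank case; since $m = n - w$, we have $(m-2)/2 + w = (n - 2 + w)/2$, which is precisely the claimed exponent.

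The derivation of the diagonal-form count is itself a short induction: for $m = 1$ the equation $c_1 z_1^2 = 0$ has the single solution $z_1 = 0$; for $m = 2$ one analyzes $c_1 z_1^2 + c_2 z_2^2 = 0$ directly, getting $q$ solutions if $-c_1/c_2$ is a nonzero square and $1$ solution otherwise, matching $q^{2-1} \pm (q-1)q^0$; and for larger $m$ one splits off a hyperbolic plane or uses Gauss sums. I would simply cite Chapter 6 of \cite{lidl} for these facts, as the paper already does, rather than reprove them.

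The main obstacle — though it is conceptual rather than technical — is making the identification between the radical $W$ of the polarization $B$ and the ``missing variables'' of a diagonalization fully rigorous, i.e. checking that diagonalizing $Q$ over $\mathbb{F}_q$ (using $\operatorname{char} \neq 2$ to pass freely between $Q$ and its symmetric bilinear form $\tfrac12 B$) simultaneously exhibits $W$ as a coordinate subspace complementary to the support of the form. Once that is in place, everything else is bookkeeping with the exponent arithmetic $(m-2)/2 + w = (n-2+w)/2$ and the parity case split, so I expect no real difficulty beyond setting up the normal form correctly.
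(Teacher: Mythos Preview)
Your proposal is correct and is precisely the standard argument the paper is invoking: the paper does not give its own proof of this proposition but simply cites Chapter~6 of \cite{lidl}, and your diagonalize--split-off-the-radical--count-nondegenerate-zeros outline is exactly that classical computation. Your check that $Q$ vanishes on $W$ (since $B(x,x)=2Q(x)$ and $q$ is odd) and the exponent bookkeeping $(m-2)/2+w=(n-2+w)/2$ are both fine, so there is nothing to add.
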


The following consequence is important for this paper.

\begin{prop} \label{Szero}
Let $Q(x)=Tr(x^{q+1}-x^2)$, and let $N=|\{x\in K : Q(x)=0\}|$. Then $N=q^{n-1}$ only if $n$ is an even integer with $(n,p)=1$, or an odd integer with $(n,p)=p$.	
\end{prop}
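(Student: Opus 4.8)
The plan is to apply Proposition~\ref{counts} to $Q(x)=\Tr(x^{q+1}-x^2)$, which forces us to understand the rank of $Q$, equivalently the dimension $w$ of the radical $W$ of its polarization $B$. By Proposition~\ref{counts}, $N=q^{n-1}$ happens precisely when $Q$ has odd rank, i.e.\ when $n-w$ is odd. So the whole statement reduces to determining the parity of $w$ together with the parity of $n$, and tracking how these interact with the condition $p\mid n$ versus $p\nmid n$. Concretely, I would first compute $B(x,y)=Q(x+y)-Q(x)-Q(y)$. Since $\Tr$ is additive and the cross terms of $(x+y)^{q+1}$ are $x^qy+xy^q$ and of $(x+y)^2$ are $2xy$, we get $B(x,y)=\Tr(x^qy+xy^q-2xy)$. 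Using $\Tr(x^qy)=\Tr(xy^q)$ (apply the Frobenius-invariance of the trace to $xy^q$, then relabel), this becomes $B(x,y)=\Tr\!\big((2x^q-2x)y\big)$, i.e.\ $B(x,y)=\Tr\!\big(2(x^q-x)y\big)$.

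Next I would identify the radical: $x\in W$ iff $\Tr\big(2(x^q-x)y\big)=0$ for all $y\in K$, and since the trace form $(u,y)\mapsto\Tr(uy)$ is nondegenerate on $K=\mathbb F_{q^n}$ and $2\neq0$ (as $q$ is odd), this forces $x^q-x=0$, i.e.\ $x\in\mathbb F_q$. Hence $W=\mathbb F_q$ and $w=\dim_{\mathbb F_q}(W)=1$ — wait, I must be careful about the ground field over which the dimension is taken: in Proposition~\ref{counts} the dimension $w$ and rank are as $\mathbb F_q$-vector spaces on $K=\mathbb F_{q^n}$, so indeed $w=1$ here. Therefore the rank of $Q$ is $n-1$. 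Then $N=q^{n-1}$ iff the rank $n-1$ is odd iff $n$ is even. So from the quadratic-form side alone we learn: $N=q^{n-1}$ iff $n$ is even.

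It remains to reconcile this with the arithmetic conditions in the statement, which are phrased in terms of $(n,p)$. Here I would need to recall that $Q(x)=\Tr(x^{q+1}-x^2)$ arose (via Proposition~\ref{prop-relation}) as the object counting $F_q(n,0,0)$ only after a $q$-to-$1$ substitution $x\mapsto x^q-x$; the substitution $c\mapsto c^q-c$ has kernel $\mathbb F_q$, and the relevant bookkeeping about which elements of $\mathbb F_{q^n}$ actually lie in the image, together with the behaviour of the trace on $\mathbb F_q$ (namely $\Tr|_{\mathbb F_q}=n\cdot\id$, which is zero iff $p\mid n$), is what introduces the dependence on whether $p\mid n$. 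I expect the main obstacle to be exactly this: being honest about the case $p\mid n$, where some of the ``generic'' counting breaks down because the trace degenerates on the prime field, and checking that in that case $N=q^{n-1}$ can only occur for $n$ odd — equivalently that $p$ odd and $p\mid n$ with $n$ even would be excluded by a rank-parity argument that has to be redone taking the extra degeneracy into account. I would organize the final write-up as: (i) compute $B$; (ii) show $W=\mathbb F_q$ hence rank $=n-1$ when $p\nmid n$; (iii) redo the radical computation when $p\mid n$, where $x^q-x=0$ still gives $W=\mathbb F_q$ but one must additionally check the value of $Q$ on $W$ to pin down parity; (iv) combine with Proposition~\ref{counts} and the parity of $n$ to read off exactly the two stated cases.
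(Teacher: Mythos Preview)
Your computation of the polarization contains a genuine error: the identity $\Tr(x^qy)=\Tr(xy^q)$ is false in general. Frobenius-invariance of the trace gives $\Tr(z^q)=\Tr(z)$, which applied to $z=x^qy$ yields $\Tr(x^qy)=\Tr(x^{q^2}y^q)$, not $\Tr(xy^q)$. Doing the calculation correctly,
\[
B(x,y)=\Tr(x^qy+xy^q-2xy)=\Tr\!\big(y^q(x^{q^2}-2x^q+x)\big),
\]
so by nondegeneracy of the trace pairing the radical is $W=\{x\in K:\ x^{q^2}-2x^q+x=0\}$, not $\mathbb F_q$. Writing $x^{q^2}-2x^q+x=(x^q-x)^q-(x^q-x)$, one sees $x\in W$ iff $x^q-x\in\mathbb F_q$; taking traces shows this forces $x^q-x=0$ when $p\nmid n$ (so $w=1$), while for $p\mid n$ every $c\in\mathbb F_q$ arises as $x^q-x$ with $x\in\mathbb F_{q^p}\subseteq K$ (so $w=2$). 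Equivalently, the associated conventional polynomial is $l(t)=(t-1)^2$ and $w=\deg\gcd(l(t),t^n-1)$, which is $1$ or $2$ according as $p\nmid n$ or $p\mid n$.

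This is exactly where the dependence on $(n,p)$ enters: with the correct $w$, Proposition~\ref{counts} gives $N=q^{n-1}$ iff $n-w$ is odd, i.e.\ iff ($n$ even and $p\nmid n$) or ($n$ odd and $p\mid n$). Your attempt to recover the $(n,p)$ condition by appealing to Proposition~\ref{prop-relation} and the behaviour of the trace on $\mathbb F_q$ is a symptom of the miscomputed radical; once $W$ is identified correctly, nothing external is needed, and your step~(iii) about ``checking the value of $Q$ on $W$'' is moot in odd characteristic since $Q(x)=\tfrac12 B(x,x)$ vanishes on $W$ automatically.
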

\begin{proof}
 We have $B(x,y)=Q(x+y)-Q(x)-Q(y)=T_1(x^qy+y^qx-2xy)=T_1(y^q(x^{q^2}-2x^q+x))$ and hence $W =\{ x\in K : B(x,y)=0 \text{  for all $y\in K$}\}=\{x\in K : x^{q^2}-2x^q+x=0\}$. 
 Let $l(x)=x^2-2x+1=(x-1)^2$. Then 
 $$
 (l(x),x^n-1)=\begin{cases}
 \;x-1 &\text{ if $(n,p)=1$,}\\
 (x-1)^2 &\text{ if $(n,p)=p$}
 \end{cases}
 $$
 and so 
 $$
 w=\begin{cases}
 1 &\text{ if $(n,p)=1$,}\\
 2 &\text{ if $(n,p)=p$.}
 \end{cases}
 $$ 
 Therefore $N=q^{n-1}$ only if $n$ is an even integer with $(n,p)=1$ or an odd integer with $(n,p)=p$ by  Proposition \ref{counts}, since $n-w$ is odd only in these situations.
 	
\end{proof}

\subsection{Maximal and minimal curves}

	Let $q=p^r$ where $p$ is any prime and $r\ge 1$ is an integer. Let $X$ be a projective smooth absolutely irreducible curve of genus $g$ defined over $\mathbb{F}_q$.
Consider the $L$-polynomial of the curve $X$ over $\mathbb F_{q}$, defined by
$$L_{X/\mathbb{F}_q}(T)=L_X(T)=exp\left( \sum_{n=1}^\infty ( \#X(\mathbb F_{q^n}) - q^n - 1 )\frac{T^n}{n}  \right).$$
where $\#X(\mathbb F_{q^n})$ denotes the number of $\mathbb F_{q^n}$-rational points of $X$. 
It is well known that $L_X(T)$ is a polynomial of degree $2g$ with integer coefficients, so we write it as 
\begin{equation} \label{L-poly}
L_X(T)= \sum_{i=0}^{2g} c_i T^i, \ c_i \in \mathbb Z.
\end{equation}
It is also well known that $c_0=1$ and $c_{2g}=q^g$.

Let $\eta_1,\cdots,\eta_{2g}$ be the roots of the reciprocal of the $L$-polynomial of $X$ over 
$\mathbb F_{q}$ (sometimes called the Weil numbers of $X$). Then, for any $n\geq 1$, 
the number of rational points of $X$ over $\mathbb F_{q^{n}}$ is
given by
\begin{equation}\label{eqn-sum of roots}
 \#X(\mathbb F_{q^{n}})=(q^{n}+1)- \sum\limits_{i=1}^{2g}\eta_i^n. 
\end{equation}
The Riemann Hypothesis for curves over finite fields  
states that $|\eta_i|=\sqrt{q}$ for all $i=1,\ldots,2g$. 
It follows immediately that
\begin{equation}
|\#X(\mathbb F_{q^n})-(q^n+1)|\leq 2g\sqrt{q^n}
\end{equation}
which is the Hasse-Weil bound.

We call $X(\mathbb F_{q})$ \emph{maximal} if $\eta_i=-\sqrt{q}$ for all $i=1,\cdots,2g$, so the Hasse-Weil upper bound is met. Equivalently, $X(\mathbb F_{q})$ is maximal
if and only if $L_X(T)=(1+\sqrt{q} T)^{2g}$.

We call $X(\mathbb F_{q})$ \emph{minimal} if $\eta_i=\sqrt{q}$ for all $i=1,\cdots,2g$.
Equivalently, $X(\mathbb F_{q})$ is minimal
if and only if $L_X(T)=(1-\sqrt{q} T)^{2g}$.

Note that if $X(\mathbb F_{q})$ is minimal or maximal then $q$ is a square (i.e.\ $r$ is even).

The following properties follow immediately.

\begin{prop} \label{minimal-prop}
	\begin{enumerate}
	\item If $X(\mathbb F_{q})$ is maximal then  $X(\mathbb F_{q^{n}})$ is minimal for even $n$ and maximal for odd $n$. 
	\item  If $X(\mathbb F_{q})$ is minimal then  $X(\mathbb F_{q^{n}})$ is minimal for all $n$.
	\end{enumerate}
\end{prop}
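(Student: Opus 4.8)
The plan is to derive everything from the base-change behavior of the reciprocal roots $\eta_1,\dots,\eta_{2g}$ (the Weil numbers) under extension of the constant field. The fundamental fact I would invoke is that if $\eta_1,\dots,\eta_{2g}$ are the reciprocal roots of $L_{X/\mathbb F_q}(T)$, then $\eta_1^n,\dots,\eta_{2g}^n$ are the reciprocal roots of $L_{X/\mathbb F_{q^n}}(T)$. This is immediate from the point-count formula~(\ref{eqn-sum of roots}): applying it to the field $\mathbb F_{q^m}$ after replacing the base field by $\mathbb F_{q^n}$ gives $\#X(\mathbb F_{(q^n)^m}) = (q^n)^m + 1 - \sum_i (\eta_i^n)^m$, and comparing with~(\ref{eqn-sum of roots}) over $\mathbb F_{q^{nm}}$ forces the multiset $\{\eta_i^n\}$ to be the Weil numbers over $\mathbb F_{q^n}$.

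For part (1), suppose $X(\mathbb F_q)$ is maximal, so $\eta_i = -\sqrt q$ for all $i$. Then over $\mathbb F_{q^n}$ the Weil numbers are $\eta_i^n = (-\sqrt q)^n = (-1)^n q^{n/2} = (-1)^n \sqrt{q^n}$. If $n$ is odd this equals $-\sqrt{q^n}$ for every $i$, so $L_{X/\mathbb F_{q^n}}(T) = (1+\sqrt{q^n}\,T)^{2g}$ and $X(\mathbb F_{q^n})$ is maximal; if $n$ is even this equals $+\sqrt{q^n}$ for every $i$, so $L_{X/\mathbb F_{q^n}}(T) = (1-\sqrt{q^n}\,T)^{2g}$ and $X(\mathbb F_{q^n})$ is minimal. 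For part (2), suppose $X(\mathbb F_q)$ is minimal, so $\eta_i = \sqrt q$ for all $i$; then over $\mathbb F_{q^n}$ the Weil numbers are $\eta_i^n = q^{n/2} = \sqrt{q^n}$ for every $i$ and every $n\ge 1$, so $X(\mathbb F_{q^n})$ is minimal for all $n$.

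There is essentially no serious obstacle here; the only point that needs care is justifying the base-change statement for the Weil numbers, which I would do either by the elementary generating-function manipulation above or by citing the fact that the $L$-polynomial over $\mathbb F_{q^n}$ is $\prod_i (1-\eta_i^n T)$. One should also note, as the paragraph preceding the proposition already does, that maximality or minimality forces $q$ to be a square, so $\sqrt{q}$ and $\sqrt{q^n}$ are genuine elements of the relevant fields and the expressions $(-1)^n q^{n/2}$ are integers; this makes the case analysis on the parity of $n$ clean.
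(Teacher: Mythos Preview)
Your proposal is correct and is precisely the immediate argument the paper has in mind: the paper does not give a written proof at all, merely saying ``the following properties follow immediately,'' and what follows immediately is exactly your computation with the $n$-th powers of the Weil numbers via equation~\eqref{eqn-sum of roots}. There is nothing to add or change.
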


\subsection{Supersingular Curves}\label{supsing}

A curve $X$ of genus $g$ defined over $\mathbb F_q$ is \emph{supersingular} if any of the following 
equivalent properties hold.

\begin{enumerate}
\item All Weil numbers of $X$ have the form $\eta_i = \sqrt{q}\cdot \zeta_i$ where $\zeta_i$ is a root of unity.
\item The Newton polygon of $X$ is a straight line of slope $1/2$.
\item The Jacobian of $X$ is geometrically isogenous to $E^g$ where
$E$ is a supersingular elliptic curve.
\item If $X$ has $L$-polynomial
$L_X(T)=1+\sum\limits_{i=1}^{2g} c_iT^i$ 
then
$$ord_p(c_i)\geq \frac{ir}{2}, \ \mbox{for all $i=1,\ldots
,2g$,}$$ where $q=p^r$.
\end{enumerate}

By the first property, a supersingular curve defined over $\mathbb F_q$ becomes minimal over some finite extension of $\mathbb F_q$.
Conversely, any minimal or maximal curve is supersingular.

\section{Calculation of L-Polynomial of  $C:y^q-y=x^{q+1}-x^2$}\label{sectzeta}

First note that by Proposition \ref{prop-relation}, in order to complete
the results of this paper
we need to find the number $N$ of $x\in \mathbb{F}_{q^n}$ with $T_1(x^{q+1}-x^2)=0$.
Because elements of trace 0 have the form $y^q-y$,  finding $N$ is equivalent
to finding the exact number of $\mathbb{F}_{q^n}$-rational points on $C$.
Indeed, 
\begin{equation}\label{quadpts}
\#C(\mathbb F_{q^n})=qN+1.
\end{equation}

In this section we present the calculation of the exact number of $\mathbb{F}_{q^n}$-rational
points on $C:y^q-y=x^{q+1}-x^2$, for all $n$, which is equivalent to calculating the L-polynomial of $C$.
This will complete the proof of the results in this paper.

We will need the following elementary lemmas later. 

\begin{lemma}\label{lemma-unity}
	\begin{enumerate} \item If $p\equiv 1 \pmod 4$ then $\sqrt{p}$ is an element of the splitting field of $x^{2p}-1$ over $\mathbb Q$ and not an element of the splitting field of $x^{2p}+1$ over $\mathbb Q$. 
	\item If $p\equiv 3 \pmod 4$ then $\sqrt{p}$ is an element of the splitting field of $x^{2p}+1$ over $\mathbb Q$ and not an element of the splitting field of $x^{2p}-1$ over $\mathbb Q$.
	\end{enumerate}
\end{lemma}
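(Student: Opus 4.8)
The plan is to recall the classical formula for the Gauss sum, or equivalently to work directly with the quadratic Gauss sum $g = \sum_{k=0}^{p-1} \zeta_p^{k^2}$ where $\zeta_p = e^{2\pi i/p}$ is a primitive $p$-th root of unity. It is a standard fact (Gauss) that $g^2 = \left(\tfrac{-1}{p}\right) p$, so $g^2 = p$ when $p \equiv 1 \pmod 4$ and $g^2 = -p$ when $p \equiv 3 \pmod 4$. In the first case $\sqrt{p} = \pm g$ lies in $\mathbb{Q}(\zeta_p)$, which is contained in the splitting field of $x^{2p}-1$ over $\mathbb{Q}$ (since that splitting field is $\mathbb{Q}(\zeta_{2p}) = \mathbb{Q}(\zeta_p)$ as $\zeta_2 = -1 \in \mathbb{Q}$). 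In the second case $\sqrt{-p} = \pm g \in \mathbb{Q}(\zeta_p)$, so $\sqrt{p} = \pm g/\sqrt{-1} \in \mathbb{Q}(\zeta_p, i) = \mathbb{Q}(\zeta_{4p})$, and since $p \equiv 3 \pmod 4$ the $4p$-th roots of unity are exactly the $2p$-th roots of $-1$ together with the $2p$-th roots of $+1$; more to the point, $i = \zeta_4$ is a primitive $4$th root of unity hence a $2p$-th root of $-1$ times stuff — cleaner is to note $\mathbb{Q}(\zeta_{4p})$ is the splitting field of $x^{4p}-1$, and since $2p$ is even with $\gcd(2,p)=1$, $x^{2p}+1$ splits in $\mathbb{Q}(\zeta_{4p})$ but its roots generate all of $\mathbb{Q}(\zeta_{4p})$ because a primitive root of $x^{2p}+1$ is a primitive $4p$-th root of unity when $p$ is odd. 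So in either case the ``is an element of'' half of each statement follows from $g$ lying in the appropriate cyclotomic field.

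For the negative assertions — that $\sqrt{p}$ is \emph{not} in the other splitting field — I would argue by ramification. The splitting field of $x^{2p}-1$ is $\mathbb{Q}(\zeta_p)$, in which only the primes $p$ (and nothing else) ramifies, and $\mathbb{Q}(\sqrt{p}) \subseteq \mathbb{Q}(\zeta_p)$ precisely because $p$ is the only ramified prime and the unique quadratic subfield $\mathbb{Q}(\sqrt{p^*})$ with $p^* = (\tfrac{-1}{p})p$ sits inside. When $p \equiv 1 \pmod 4$ we have $p^* = p$, so $\sqrt{p} \in \mathbb{Q}(\zeta_p)$ but $\sqrt{-p} \notin \mathbb{Q}(\zeta_p)$; and the splitting field of $x^{2p}+1$ is $\mathbb{Q}(\zeta_{4p})$, whose quadratic subfields are $\mathbb{Q}(i)$, $\mathbb{Q}(\sqrt{p^*}) = \mathbb{Q}(\sqrt{p})$, and $\mathbb{Q}(\sqrt{-p})$ — wait, that contains $\sqrt{p}$, so for $p\equiv 1\pmod 4$ I instead argue that $\sqrt{p}\notin \mathbb{Q}(\zeta_{2p})=\mathbb{Q}(\zeta_p)$ is false; rather the claim for $p\equiv 1$ is that $\sqrt p$ is \emph{not} in the splitting field of $x^{2p}+1$, i.e. not in $\mathbb{Q}(\zeta_{2p})$ over $\mathbb Q$ — but $x^{2p}+1$ has splitting field $\mathbb{Q}(\zeta_{4p})$, not $\mathbb{Q}(\zeta_{2p})$. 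Let me restate cleanly: when $p\equiv 1\pmod 4$, the splitting field of $x^{2p}+1$ is $\mathbb{Q}(\zeta_{4p})$, and its unique real quadratic subfield is $\mathbb{Q}(\sqrt{2})$ if $p=\dots$ — this is getting delicate, so the honest approach is: the roots of $x^{2p}+1$ are $\zeta_{4p}^{j}$ for odd $j$, and since $p\equiv 1\pmod 4$ one checks $\sqrt p\in\mathbb{Q}(\zeta_{4p})$ always; the genuine content must be about a \emph{specific} real field.

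Given this delicacy, the cleanest route — and the one I would actually write — is to determine for each of $\mathbb{Q}(\text{splitting field of }x^{2p}\pm 1)$ its full lattice of quadratic subfields via Galois theory over $(\mathbb{Z}/4p)^\times \cong \mathbb{Z}/2 \times (\mathbb{Z}/p)^\times$, identify which of $\mathbb{Q}(\sqrt{p}), \mathbb{Q}(\sqrt{-p}), \mathbb{Q}(i)$ occur, and read off the four assertions; the key input is again the Gauss sum evaluation pinning down $\mathbb{Q}(\sqrt{(\tfrac{-1}{p})p}) \subset \mathbb{Q}(\zeta_p)$, together with the conductor-discriminant observation that $\mathbb{Q}(\sqrt{p})$ and $\mathbb{Q}(\sqrt{-p})$ cannot \emph{both} lie in $\mathbb{Q}(\zeta_p)$ since $\mathbb{Q}(\zeta_p)$ has a unique quadratic subfield. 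The main obstacle is precisely bookkeeping the right splitting field: $x^{2p}-1$ gives $\mathbb{Q}(\zeta_{2p})=\mathbb{Q}(\zeta_p)$ but $x^{2p}+1$ gives $\mathbb{Q}(\zeta_{4p})$, a strictly larger field with \emph{three} quadratic subfields, so the two cases $p\equiv 1$ and $p\equiv 3\pmod 4$ are genuinely asymmetric and must be handled separately rather than by a uniform argument.
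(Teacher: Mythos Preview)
The paper does not actually prove this lemma: it simply cites it as Exercise~2.1 in Washington's \emph{Cyclotomic Fields}. So any correct argument you give is already more than the paper supplies.

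Your Gauss-sum approach is the standard route and handles three of the four assertions cleanly: the quadratic Gauss sum gives $\sqrt{p^*}\in\mathbb{Q}(\zeta_p)$ with $p^*=(-1)^{(p-1)/2}p$, and since $\mathrm{Gal}(\mathbb{Q}(\zeta_p)/\mathbb{Q})$ is cyclic there is a \emph{unique} quadratic subfield, namely $\mathbb{Q}(\sqrt{p^*})$. This yields both positive claims and the negative claim in part~2, exactly as you outline (before the exposition gets tangled).

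Where you got stuck is not your fault: the negative claim in part~1, read literally, is false. The splitting field of $x^{2p}+1$ over $\mathbb{Q}$ is $\mathbb{Q}(\zeta_{4p})$ (since $\zeta_{4p}$ itself is a root), and $\mathbb{Q}(\zeta_{4p})\supset\mathbb{Q}(\zeta_p)\ni\sqrt{p}$ when $p\equiv 1\pmod 4$. You noticed this, and it is the reason your bookkeeping would not close.

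What the paper actually \emph{uses} later (in the proof of Lemma~\ref{max-min}, Case~2A) is the different statement that $\sqrt{p}$ is not a $\mathbb{Q}$-\emph{linear combination of the roots} of $x^{2p}+1$. That is true: the roots are $\{\pm i\,w^k:0\le k\le p-1\}$ with $w=\zeta_p$, so their $\mathbb{Q}$-span is $i\cdot\mathbb{Q}(\zeta_p)$; hence $\sqrt{p}$ lies in that span iff $-i\sqrt{p}\in\mathbb{Q}(\zeta_p)$, i.e.\ iff $\sqrt{-p}\in\mathbb{Q}(\zeta_p)$, which fails for $p\equiv 1\pmod 4$ by the unique-quadratic-subfield argument you already gave. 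So your method proves precisely what the paper needs; it is only the wording of the lemma (``splitting field'' in place of ``$\mathbb{Q}$-span of the roots'') that is off.
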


Lemma \ref{lemma-unity} is Exercise 2.1 in ~\cite{cyclotomic}.

\begin{lemma}\label{lemma-unity2}
	$1$ cannot be written as a $\mathbb Q$-linear combination of the roots of $x^{2p}+1$.
\end{lemma}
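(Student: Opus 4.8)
The plan is to locate the roots of $x^{2p}+1$ explicitly inside the cyclotomic field $\mathbb{Q}(\zeta_{4p})$ and finish with a degree count. Let $\omega=\zeta_{4p}$ be a primitive $4p$-th root of unity. A complex number $\rho$ satisfies $\rho^{2p}=-1$ precisely when $\rho^{4p}=1$ but $\rho^{2p}\neq 1$; writing $\rho=\omega^{j}$ this happens exactly when $j$ is odd. Hence the $2p$ (distinct, since $x^{2p}+1$ is separable) roots of $x^{2p}+1$ are $\omega,\omega^{3},\dots,\omega^{4p-1}$. Writing $j=2k+1$ gives $\omega^{j}=\omega\cdot(\omega^{2})^{k}=\omega\cdot\zeta_{2p}^{k}$, and as $j$ runs through the odd residues modulo $4p$ the exponent $k$ runs through all residues modulo $2p$; therefore the set of roots of $x^{2p}+1$ is exactly $\{\,\omega\xi : \xi^{2p}=1\,\}$.

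Next I would note that every $2p$-th root of unity lies in $\mathbb{Q}(\zeta_{p})$: if $\xi^{2p}=1$ then $\xi=\pm\zeta_{p}^{k}$ for some $k$, and both $-1$ and $\zeta_{p}$ belong to $\mathbb{Q}(\zeta_{p})$. Consequently any $\mathbb{Q}$-linear combination $\sum_{\xi}c_{\xi}(\omega\xi)$ of the roots equals $\omega\beta$ with $\beta=\sum_{\xi}c_{\xi}\xi\in\mathbb{Q}(\zeta_{p})$; in other words, the $\mathbb{Q}$-span of the roots of $x^{2p}+1$ is contained in $\omega\cdot\mathbb{Q}(\zeta_{p})$.

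Finally I would argue by contradiction: suppose $1=\omega\beta$ with $\beta\in\mathbb{Q}(\zeta_{p})$. Then $\beta\neq 0$ and $\omega=\beta^{-1}\in\mathbb{Q}(\zeta_{p})$, so $\mathbb{Q}(\zeta_{4p})=\mathbb{Q}(\omega)\subseteq\mathbb{Q}(\zeta_{p})$. But $\gcd(4,p)=1$ gives $[\mathbb{Q}(\zeta_{4p}):\mathbb{Q}]=\varphi(4p)=\varphi(4)\varphi(p)=2(p-1)$, whereas $[\mathbb{Q}(\zeta_{p}):\mathbb{Q}]=p-1$; since $p\ge 3$ these degrees differ, a contradiction. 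Hence $1$ is not a $\mathbb{Q}$-linear combination of the roots of $x^{2p}+1$. I do not anticipate a genuine obstacle here; the only points needing care are the bijection between the odd powers of $\omega$ and $\omega$ times the full set of $2p$-th roots of unity, and the separability remark ensuring there are no repeated roots to worry about. A slightly longer alternative would phrase the same idea as linear independence of a subset of a power basis of $\mathbb{Q}(\zeta_{4p})/\mathbb{Q}$, but the degree computation above is the shortest route.
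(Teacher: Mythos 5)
Your proof is correct and follows essentially the same route as the paper: both arguments write each root of $x^{2p}+1$ as a fixed root of $x^{2p}+1$ times an element of $\mathbb{Q}(\zeta_p)$ (the paper uses the representative $i$ and the description $\{\pm i w^k\}$, you use $\zeta_{4p}$), and then derive a contradiction from the fact that this fixed root cannot lie in $\mathbb{Q}(\zeta_p)$. The only cosmetic difference is that the paper cites $\mathbb{Q}(w)\cap\mathbb{Q}(i)=\mathbb{Q}$ where you compute $\varphi(4p)=2(p-1)>p-1$; these are interchangeable.
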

\begin{proof} 
	Let $w$ be a primitive complex $p$-th root of unity, and let $i$ 
	be a primitive complex $4$-th root of unity. 
	Then the set of roots of $x^{2p}+1$ is $\{\pm i w^k\: : \: 0 \le k \le p-1\}$. If $1$ is  a linear combination of the roots of $x^{2p}+1$ over $\mathbb Q$, then there exist $a_1,\cdots,a_n \in \mathbb Q$ such that $$i\sum_{k=0}^{p-1}a_kw^k=1.$$ 
	But then $i\in \mathbb  Q(w)$, which is a contradiction to
	$\mathbb Q(w) \cap \mathbb  Q(i)=\mathbb Q$ (see Proposition 2.4 in \cite{cyclotomic}). 
\end{proof}

\begin{lemma}\label{max-min}
	If $q\equiv 1 \pmod 4$ then $C(\mathbb F_{q^{2p}})$ is minimal and if $q\equiv 3 \pmod 4$ then $C(\mathbb F_{q^{2p}})$ is maximal and $C(\mathbb F_{q^{4p}})$ is minimal.
\end{lemma}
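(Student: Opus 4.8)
The plan is to show that $C$ is supersingular and then pin down the precise extension over which it becomes minimal/maximal by combining the quadratic-form count with the arithmetic of roots of unity supplied by Lemmas \ref{lemma-unity} and \ref{lemma-unity2}. First I would observe that $C:y^q-y=x^{q+1}-x^2$ is an Artin--Schreier curve, and its Weil numbers can be expressed in terms of Gauss sums (or, equivalently, one can use that the number of affine points is $qN$, where $N$ is the quadratic-form count from Proposition \ref{counts}). Because the relevant exponential sums attached to $Q(x)=\Tr(x^{q+1}-x^2)$ are Gauss sums, each Weil number $\eta_i$ has the shape $\sqrt{q}\cdot\zeta_i$ with $\zeta_i$ a root of unity; hence $C$ is supersingular in the sense of Section \ref{supsing}. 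In particular there is a finite extension over which $C$ becomes minimal, and by Proposition \ref{minimal-prop} once it is minimal over $\mathbb F_{q^m}$ it stays minimal over every further extension.

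Next I would locate the first extension where minimality or maximality occurs by using the point-count dichotomy. By \eqref{quadpts}, $\#C(\mathbb F_{q^n})=qN+1$ where $N=|\{x\in\mathbb F_{q^n}:Q(x)=0\}|$, and Proposition \ref{Szero} tells us exactly when $N=q^{n-1}$, i.e.\ when the ``middle'' (rank-odd) case occurs. For $C$ to be minimal or maximal over $\mathbb F_{q^n}$ we need $\sum_i \eta_i^n=\pm 2g\sqrt{q^n}$, which forces every $\eta_i^n$ to be real of absolute value $\sqrt{q^n}$, hence $\eta_i^n=\pm q^{n/2}$; combined with $\eta_i=\sqrt q\,\zeta_i$ this says each $\zeta_i^n=\pm 1$. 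I would compute the genus of $C$ (it is $g=(q-1)/2$, coming from the Artin--Schreier extension ramified only at infinity with the relevant conductor), determine the set of roots of unity $\zeta_i$ that actually occur — these will be $2p$-th roots of unity, the $\pm$ sign depending on whether $q\equiv 1$ or $3\pmod 4$ via the quadratic Gauss sum $g(\chi)=\pm\sqrt{p}$ or $\pm i\sqrt{p}$ — and then check that raising to the $2p$-th power sends them all to $+1$ (when $q\equiv1\bmod4$) or all to $-1$ (when $q\equiv3\bmod4$). This is where Lemma \ref{lemma-unity} enters: it identifies $\sqrt p$ with the splitting field of $x^{2p}-1$ or $x^{2p}+1$ according to $p\bmod 4$, which is exactly what controls whether the normalized Weil numbers are $2p$-th roots of $+1$ or of $-1$.

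Finally I would assemble the two cases. If $q\equiv1\pmod4$, the $\zeta_i$ are genuine $2p$-th roots of unity, so $\zeta_i^{2p}=1$ and $\eta_i^{2p}=q^p$ for all $i$, giving $L_{C/\mathbb F_{q^{2p}}}(T)=(1-q^{p/2}T)^{2g}$, i.e.\ $C(\mathbb F_{q^{2p}})$ is minimal (note $q^p$ is a square since $q\equiv1\bmod4$ makes $q$ a square only after... — more precisely one checks $q^{2p}$ is a square so $\sqrt{q^{2p}}=q^p$ is an integer). If $q\equiv3\pmod4$, the Gauss sum contributes a factor of $i$, so the $\zeta_i$ satisfy $\zeta_i^{2p}=-1$, whence $\eta_i^{2p}=-q^p$ and $C(\mathbb F_{q^{2p}})$ is maximal; applying Proposition \ref{minimal-prop}(1) with $n=2$ then gives that $C(\mathbb F_{q^{4p}})=C((\mathbb F_{q^{2p}})^2)$ is minimal. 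To rule out that minimality/maximality already happens over a smaller field I would invoke Lemma \ref{lemma-unity2}: if some proper power $\eta_i^{n}$ were real of the right size with $n<2p$, one would get $1$ expressed as a $\mathbb Q$-combination of the $2p$-th roots of $-1$, contradicting Lemma \ref{lemma-unity2} (and the $x^{2p}-1$ case is handled by the analogous classical fact that $\sqrt p$ has exact degree $2$ over $\mathbb Q$ inside $\mathbb Q(\zeta_p)$). The main obstacle I anticipate is the explicit determination of the multiset of normalized Weil numbers $\zeta_i$ — i.e.\ the precise Gauss-sum evaluation for the quadratic form $\Tr(x^{q+1}-x^2)$ and the verification that exactly the right $2p$-th roots of $\pm1$ occur with the right multiplicities; once that is in hand, the minimal/maximal conclusions are immediate from Proposition \ref{minimal-prop}.
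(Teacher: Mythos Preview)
Your outline has the right general shape but contains a concrete error and leaves the decisive step undone. First, the genus: for the Artin--Schreier cover $y^q-y=f(x)$ with $\deg f=q+1$ one gets $g=\tfrac{(q-1)q}{2}$, not $\tfrac{q-1}{2}$. This matters because the check ``$\#C(\mathbb F_{q^{2p}})-(q^{2p}+1)=\pm 2g\,q^{p}$'' is exactly how one sees that $C(\mathbb F_{q^{2p}})$ is maximal or minimal, and with the wrong $g$ that identification fails. Second, and more seriously, the heart of the lemma is to decide \emph{which} of the two alternatives (maximal or minimal) holds over $\mathbb F_{q^{2p}}$. You propose to do this by explicitly determining the multiset of normalized Weil numbers $\zeta_i$ via Gauss-sum evaluations and then reading off $\zeta_i^{2p}$; you yourself flag this as ``the main obstacle'' and do not carry it out. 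That is precisely the step the paper manages to avoid.

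The paper's argument proceeds differently and is worth internalising. From Proposition~\ref{Szero} (the even-rank case at $n=2p$) and \eqref{quadpts} one gets directly that $C(\mathbb F_{q^{2p}})$ is either maximal or minimal; hence $C(\mathbb F_{q^{4p}})$ is minimal and every $\zeta_i$ is a $4p$-th root of unity, in fact a root of $x^{2p}-1$ (minimal case) or of $x^{2p}+1$ (maximal case). The discrimination between these two cases comes not from computing the $\zeta_i$, but from the single easy count over $\mathbb F_q$: since $y^q-y=x^{q+1}-x^2=0$ for all $x,y\in\mathbb F_q$, one has $\#C(\mathbb F_q)=q^2+1$, giving $\sum_i\zeta_i=-\sqrt{q}(q-1)$. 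Lemmas~\ref{lemma-unity} and~\ref{lemma-unity2} are then used by contradiction: if $q\equiv 1\pmod 4$ and $C(\mathbb F_{q^{2p}})$ were maximal, this sum would express $\sqrt q$ (hence $1$ or $\sqrt p$, according as $q$ is a square or not) as a $\mathbb Q$-linear combination of roots of $x^{2p}+1$, which those lemmas forbid; and symmetrically for $q\equiv 3\pmod 4$. So the lemmas are not used to ``rule out that minimality already happens over a smaller field'' as you suggest, but to resolve the max/min dichotomy at $n=2p$ itself. Note also that the case split is governed by $q\bmod 4$ (with a further sub-split into $q$ square or not), whereas Lemma~\ref{lemma-unity} is stated in terms of $p\bmod 4$; your sketch blurs this distinction.
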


\begin{proof}  Note that the genus of the curve $C$ is $q(q-1)/2$.

We consider $C$ over the extension field $\mathbb F_{q^{n}}$ where $n=2p$. By Proposition \ref{Szero} and its proof, $N=q^{n-1}\pm (q-1)q^{n/2}$ and so 
$\#C(\mathbb F_{q^{n}}) - (q^n+1)=\pm q(q-1)q^{n/2}$ by equation \eqref{quadpts}.
Therefore  $C(\mathbb F_{q^{2p}})$ is maximal or minimal. 	

By Proposition  \ref{minimal-prop},  $C(\mathbb F_{q^{4p}})$ is minimal, for any $q$.
Therefore, by equation \eqref{eqn-sum of roots}, for any $n$ we have 
	 \begin{equation}\label{eqn-sum of roots2}
	\#C(\mathbb F_{q^n})-(q^n+1)=-\sum_{j=0}^{q(q-1)} \eta_i^n
	\end{equation}  
	where $\eta_j=\sqrt{q}\zeta_j$ and the $\zeta_j$ are $(4p)$-th roots of unity.
	We  rewrite \eqref{eqn-sum of roots2} as 
	\begin{equation}\label{eqn-roots-of-unity}
	-q^{-n/2}\left[\#C(\mathbb F_{q^n})-(q^n+1)\right]=\sum_{i=0}^{q(q-1)} \zeta_i^n.
	\end{equation}  
		Considering $C$ over $\F_q$ now, since  $y^q-y=x^{q+1}-x^2=0$ for all $x,y\in \mathbb F_q$, we have 
	\begin{equation*}
	\#C(\mathbb F_{q})=q^2+1=q+1+q(q-1)
	\end{equation*} 
	and therefore 
	\begin{equation}\label{eqn-n=1}
	-q^{-1/2}\left[\#C(\mathbb F_q)-(q+1)\right]=-\sqrt{q}(q-1).
	\end{equation} 
	Putting $n=1$ in \eqref{eqn-roots-of-unity} and using \eqref{eqn-n=1}  
	we  obtain
	\begin{equation}\label{sumn1}
	\sum_{i=0}^{q(q-1)} \zeta_i= -\sqrt{q}(q-1).
	\end{equation}
The $\zeta_j$ are roots of $x^{4p}-1=(x^{2p}-1)(x^{2p}+1)$. Furthermore,
	all $\zeta_j$ are roots of $x^{2p}-1$ if and only if $C(\mathbb F_{q^{2p}})$ is minimal,
	and all $\zeta_j$ are roots of $x^{2p}+1$ if and only if $C(\mathbb F_{q^{2p}})$ is
	maximal.
	
	Recall from the start of this proof that $C(\mathbb F_{q^{2p}})$ is either maximal or minimal. 
	
	{\bf Case 1.} Suppose $q$ is a square (which implies $q\equiv 1 \pmod 4$). 
	If $C(\mathbb F_{q^{2p}})$ is
	maximal then \eqref{sumn1} expresses $1$ as a $\mathbb{Q}$-linear combination of 
the roots of $x^{2p}+1$, which contradicts Lemma \ref{lemma-unity2}.
Therefore $C(\mathbb F_{q^{2p}})$ is minimal.

{\bf Case 2A.} Suppose $q$ is a nonsquare and that $q\equiv 1 \pmod 4$. 
	If $C(\mathbb F_{q^{2p}})$ is
	maximal then \eqref{sumn1} expresses $\sqrt{p}$ as a $\mathbb{Q}$-linear combination of 
the roots of $x^{2p}+1$, which contradicts Lemma \ref{lemma-unity} part 1.
Therefore $C(\mathbb F_{q^{2p}})$ is minimal.

{\bf Case 2B.} Suppose $q$ is a nonsquare and that $q\equiv 3 \pmod 4$. 
	If $C(\mathbb F_{q^{2p}})$ is
	minimal then \eqref{sumn1} expresses $\sqrt{p}$ as a $\mathbb{Q}$-linear combination of 
the roots of $x^{2p}-1$, which contradicts Lemma \ref{lemma-unity} part 2.
Therefore $C(\mathbb F_{q^{2p}})$ is maximal.
\end{proof}

\begin{cor}
$C$ is a supersingular curve.
\end{cor}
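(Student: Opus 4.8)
The plan is to deduce supersingularity of $C$ directly from Lemma \ref{max-min} together with the characterisations gathered in Section \ref{supsing}. The observation I would use is that property~1 of Section \ref{supsing}---that every Weil number has the shape $\sqrt q\cdot\zeta$ with $\zeta$ a root of unity---is insensitive to finite base extension, since the Weil numbers of $C$ over $\mathbb F_{q^m}$ are precisely the $m$-th powers of the Weil numbers of $C$ over $\mathbb F_q$ (this is immediate from \eqref{eqn-sum of roots}). Hence it is enough to produce a single finite extension of $\mathbb F_q$ over which $C$ is minimal or maximal.

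That extension is handed to us by Lemma \ref{max-min}: over $\mathbb F_{q^{2p}}$ the curve $C$ is minimal when $q\equiv 1\pmod 4$ and maximal when $q\equiv 3\pmod 4$. So, writing $\eta_1,\dots,\eta_{2g}$ for the Weil numbers of $C$ over $\mathbb F_q$ with $g=q(q-1)/2$, I would note that the Weil numbers of $C$ over $\mathbb F_{q^{2p}}$ are $\eta_1^{2p},\dots,\eta_{2g}^{2p}$ and that each of them equals $q^{p}$ in the minimal case and $-q^{p}$ in the maximal case. In either case $(\eta_i/\sqrt q)^{2p}=\pm 1$, so every $\eta_i/\sqrt q$ is a root of unity, and property~1 of Section \ref{supsing} then says that $C$ is supersingular. (Equivalently, one can invoke the remark at the end of Section \ref{supsing} that any minimal or maximal curve is supersingular, applied to $C$ over $\mathbb F_{q^{2p}}$, and then transport this back to $\mathbb F_q$ by the same base-change remark.)

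I do not expect a genuine obstacle here: the corollary is essentially bookkeeping on top of Lemma \ref{max-min}. The only step that deserves to be spelled out is the base-change compatibility of Weil numbers used above, which as noted follows at once from \eqref{eqn-sum of roots}.
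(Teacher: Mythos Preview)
Your proposal is correct and matches the paper's own reasoning: the paper simply notes that the corollary follows from the proof of Lemma~\ref{max-min}, where it was already observed that $C(\mathbb F_{q^{4p}})$ is minimal and hence each $\eta_j=\sqrt q\,\zeta_j$ with $\zeta_j$ a $(4p)$-th root of unity. Your version, working over $\mathbb F_{q^{2p}}$ and concluding $(\eta_i/\sqrt q)^{2p}=\pm 1$, is the same argument with a cosmetically smaller exponent.
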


The corollary follows from the proof of the Lemma.


Finally, we calculate the number of $\mathbb F_{q^n}$rational points of the curve $C:y^q-y=x^{q+1}-x^2$.

\begin{thm} \label{mainL}
Let $q=p^r$. Then we have
	$$\#C(\mathbb F_{q^n})-(q^n+1)=\begin{cases}
	0 &\text{if $(n,2p)=2$ or $p$},\\
	(-1)^{(n-1)/2}(q-1)q^{\frac n2+1}  &\text{if $(n,2p)=2p$},\\
	\left(\dfrac{-n}{p}\right)^r (q-1) q^{(n+1)/2} & \text{if $(n,2p)=1$}
	\end{cases}$$
	where $\left(\dfrac{-n}{p}\right)$ is the Legendre symbol.
\end{thm}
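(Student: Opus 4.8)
The plan is to compute $\#C(\mathbb F_{q^n}) - (q^n+1)$ in all cases by combining the quadratic-form count of Proposition~\ref{Szero} (and Proposition~\ref{counts}) with the supersingularity structure established in Lemma~\ref{max-min}. By equation~\eqref{quadpts}, $\#C(\mathbb F_{q^n}) - (q^n+1) = q(N - q^{n-1})$ where $N = |\{x \in \mathbb F_{q^n} : \Tr_{\mathbb F_{q^n}/\mathbb F_q}(x^{q+1}-x^2) = 0\}|$, so everything reduces to evaluating $N - q^{n-1}$. The easy case is $(n,2p) = 2$ or $p$: here Proposition~\ref{Szero} gives $N = q^{n-1}$ exactly (the quadratic form $Q$ has odd rank over $\mathbb F_{q^n}$, since $n-w$ is odd), so the difference is $0$.

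Next I would treat the case $(n,2p) = 2p$, i.e. $n$ even and $p \mid n$. Write $n = 2pm$. The cleanest route is to view $C(\mathbb F_{q^n})$ as $C(\mathbb F_{Q})$ with $Q = q^{n}$, but better: use Lemma~\ref{max-min} as the base case. If $q \equiv 1 \pmod 4$, then $C(\mathbb F_{q^{2p}})$ is minimal, so its Weil numbers are all $\sqrt{q^{2p}}$, and by Proposition~\ref{minimal-prop}(2) $C$ stays minimal over every further extension $\mathbb F_{(q^{2p})^m} = \mathbb F_{q^n}$; hence $\#C(\mathbb F_{q^n}) - (q^n+1) = -2g \cdot q^{n/2} = -q(q-1)q^{n/2}$. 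If $q \equiv 3 \pmod 4$, then $C(\mathbb F_{q^{2p}})$ is maximal, so by Proposition~\ref{minimal-prop}(1) $C(\mathbb F_{q^n})$ is maximal for odd $m$ (giving $+q(q-1)q^{n/2}$) and minimal for even $m$ (giving $-q(q-1)q^{n/2}$). One checks that the sign in both parities is exactly $(-1)^{(n-1)/2}$ — note $n$ is even so $(n-1)/2$ means $\lfloor (n-1)/2 \rfloor$, and the congruence class of $q \bmod 4$ together with the parity of $m$ recombine into this single formula; I would verify this sign bookkeeping carefully, as it is the one genuinely fiddly point here. The constant is $q(q-1)q^{n/2} = (q-1)q^{n/2+1}$, matching the claim.

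The main case, and the main obstacle, is $(n,2p) = 1$, i.e. $n$ odd and coprime to $p$. Here Proposition~\ref{Szero} tells us only that $N \ne q^{n-1}$; by Proposition~\ref{counts} (with $w = 1$, since $(n,p)=1$) we get $N = q^{n-1} \pm (q-1)q^{(n-1)/2}$, so $\#C(\mathbb F_{q^n}) - (q^n+1) = \pm (q-1)q^{(n+1)/2}$ and the whole problem is to pin down the sign. The tool is the supersingular structure: by the Corollary, $C$ is supersingular, and in fact from the proof of Lemma~\ref{max-min} the Weil numbers are $\eta_i = \sqrt{q}\,\zeta_i$ with $\zeta_i$ being $(4p)$-th roots of unity, and moreover (splitting $x^{4p}-1 = (x^{2p}-1)(x^{2p}+1)$) the $\zeta_i$ are $2p$-th roots of unity when $q \equiv 1 \pmod 4$ and roots of $x^{2p}+1$ when $q \equiv 3 \pmod 4$. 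So $-q^{-n/2}[\#C(\mathbb F_{q^n}) - (q^n+1)] = \sum_i \zeta_i^n$, a Gauss-sum-like expression. I would identify the multiset $\{\zeta_i\}$ explicitly: it should be, up to the $q \equiv 3$ twist by a primitive $4$th root $\pm i$, the multiset of $p$-th roots of unity each with a computable multiplicity, so that $\sum_i \zeta_i^n$ is (a twist of) a classical quadratic Gauss sum $\sum_{k} \omega^{nk}$-type quantity, whose value is governed by $\left(\frac{n}{p}\right)$. Raising to the $n$-th power permutes the nontrivial $p$-th roots of unity according to whether $n$ is a QR mod $p$, and the $q \equiv 3$ factor contributes $i^n$, which combined with the already-known base value at $n=1$ from \eqref{sumn1} forces $\sum_i \zeta_i^n = \pm\sqrt{q}(q-1)$ with sign $\left(\frac{-n}{p}\right)$; the exponent $r$ on the Legendre symbol enters because $\sqrt{q} = \sqrt{p^r} = p^{r/2}$ when $r$ is even but for odd $r$ one must track $\sqrt{p}$ through the quadratic subfield of $\mathbb Q(\zeta_p)$, where $\sqrt{p^*} \in \mathbb Q(\zeta_p)$ with $p^* = (-1)^{(p-1)/2}p$, and the $r$-th power of the relevant unit produces $\left(\frac{-n}{p}\right)^r$. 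Carefully matching the sign conventions — the $-1$ inside the Legendre symbol coming from $p^* $ versus $p$, and the interaction with $q \bmod 4$ — is where the real work lies; I would anchor everything to the already-proved identity \eqref{sumn1} (the $n=1$ instance) and to Lemmas~\ref{lemma-unity} and~\ref{lemma-unity2} to rule out the wrong sign, exactly as in the proof of Lemma~\ref{max-min}, then propagate to general odd $n$ coprime to $p$ via the Galois action on $(4p)$-th roots of unity.
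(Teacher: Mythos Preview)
Your proposal is correct and follows essentially the same route as the paper: Proposition~\ref{Szero} handles the odd-rank cases $(n,2p)\in\{2,p\}$, Lemma~\ref{max-min} with Proposition~\ref{minimal-prop} handles $(n,2p)=2p$, and for $(n,2p)=1$ you use the supersingular description of the $\zeta_i$ as $4p$-th roots of unity (split according to $q\bmod 4$), anchor to the $n=1$ identity~\eqref{sumn1}, and propagate via the Galois permutation $\zeta\mapsto\zeta^n$ together with the classical Gauss sum $\sum_k\bigl(\tfrac{k}{p}\bigr)w^k$---exactly the paper's argument, including the square/nonsquare split that accounts for the exponent $r$ on the Legendre symbol. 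The only place your sketch is thinner than the paper is that the paper writes out the linear-independence step explicitly (solving for the differences $a_k=b_k-c_k$ from~\eqref{eqn-sum-with -0} and~\eqref{eqn-indep-sqrtq}) rather than appealing to an unspecified ``identify the multiset,'' but your anchoring-and-propagating description is the same mechanism.
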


\begin{proof}
	The cases $(n,2p)=2$ or $p$ come from Proposition \ref{Szero} (they are the cases
	$N=q^{n-1}$).
	The  case $(n,2p)=2p$ follows from Lemma \ref{max-min} and Proposition \ref{minimal-prop}.
	In the rest of proof we will assume $(n,2p)=1$.
	
As a continuation of the proof of Lemma \ref{max-min}, let $w=e^{\frac{2\pi i}{p}}$. Then we have \begin{equation}\label{eqn-in-roots}
	\zeta_j \in \begin{cases}
	\{\pm w^k \: : \: 0\le k \le p-1\} &\text{ if }  q\equiv 1 \pmod 4, \\
	\{\pm i w^k\: : \: 0 \le k \le p-1\} &\text{ if } q\equiv 3 \pmod 4.\\ \end{cases}  
	\end{equation} 
	
	Recall equation \eqref{sumn1}
	\[
	\sum_{i=0}^{q(q-1)} \zeta_i= -\sqrt{q}(q-1).
	\]
 By this and equation \eqref{eqn-in-roots},
 there exist non-negative integers $b_0,c_0,\cdots, b_{p-1},c_{p-1}$ 
 (the multiplicities of the $\zeta_j$) such that 
 \begin{equation}\label{eqn-general-form-sum}
	-\sqrt{q}(q-1)=\begin{cases}
	b_0\cdot 1+ c_0\cdot(-1)+\displaystyle\sum_{k=1}^{p-1}b_kw^k+\sum_{k=1}^{p-1}c_k(-w^k) &\text{ if } q\equiv 1 \mod 4,\\
	b_0\cdot i+ c_0\cdot(-i)+\displaystyle\sum_{k=1}^{p-1}b_k(iw^k)+\sum_{k=1}^{p-1}c_k(-iw^k)  &\text{ if } q\equiv 3 \mod 4. \end{cases}
	\end{equation} 
	Define $a_i=b_i-c_i$ for $0 \le i \le p-1$, then $a_1,\cdots,a_{p-1}$ are in $\mathbb Z$ and we have 
	\begin{equation} \label{eqn-sum-with -0}
	\sum_{k=0}^{p-1}a_kw^k=\begin{cases}
	-\sqrt{q}(q-1) &\text{ if } q\equiv 1 \mod 4,\\
	i\sqrt{q}(q-1)&\text{ if } q\equiv 3 \mod 4. \end{cases}
	\end{equation} Since the sum of roots of $x^p-1$ is $0$, we have \begin{equation} \label{eq-sum-minus1}
	\sum_{k=1}^{p-1}w^k=-1.
	\end{equation} Therefore we can rewrite equation \eqref{eqn-sum-with -0} as \begin{equation*}
	\sum_{k=1}^{p-1}(a_k-a_0)w^k=\begin{cases}
	-\sqrt{q}(q-1) &\text{ if } q\equiv 1 \pmod 4,\\
	i\sqrt{q}(q-1)&\text{ if } q\equiv 3 \pmod 4. \end{cases}
	\end{equation*} 
	
{\bf Case 1.}	Suppose $q$ is a square (which implies $q\equiv 1 \pmod 4$). 
	We write the previous equation as 
	$$\sum_{k=1}^{p-1}(a_k-a_0)w^k=-\sqrt{q}(q-1)=\sqrt{q}(q-1)\sum_{k=1}^{p-1}w^k=\sum_{k=1}^{p-1}\sqrt{q}(q-1)w^k.$$ 
	Since the set $W=\{w,w^2,\cdots, w^{p-1}\}$ is the set of  roots of the $p$-th cyclotomic polynomial, which is irreducible over $\mathbb Q$, $W$ is a linearly independent set over $\mathbb Q$. Therefore, 
	 $a_k-a_0=\sqrt{q}(q-1)$ for all $1\le k \le p-1$. In particular, we have $a_1=a_2=\cdots=a_{p-1}$.
	
	Since $(n,2p)=1$, the map $\sigma: x\mapsto x^n$ permutes the set $\{\pm1,\pm w,\cdots,\pm w^{p-1}\}$ with the following properties: \begin{itemize}
		\item $\sigma$ fixes $1$ and $-1$,
		\item $\sigma$ permutes $\{ w,\cdots, w^{p-1}\}$ and $\{ -w,\cdots, -w^{p-1}\}$,
		\item if $\sigma(x)= y$ then $\sigma(-x) = -y$.
	\end{itemize} 
	Therefore  \begin{align*}
	-q^{-n/2}\left[\#C(\mathbb F_{q^n})-(q^n+1)\right]&=\sum_{i=0}^{q(q-1)} \zeta_i^n \quad \textrm{by } \eqref{eqn-roots-of-unity}\\
	&= b_0\cdot 1^n+ c_0\cdot(-1)^n+\displaystyle\sum_{k=1}^{p-1}b_k(w^k)^n+\sum_{k=1}^{p-1}c_k(-w^k)^n\\ 
	&=\sum_{k=0}^{p-1}a_kw^k  \quad \textrm{by the properties of $\sigma$} \\ 
	&=-\sqrt{q}(q-1) \quad \textrm{by } \eqref{eqn-sum-with -0}.
	\end{align*}  
	This gives 
	$$\#C(\mathbb F_{q^n})-(q^n+1)=q^{(n+1)/2}(q-1)$$
	which completes the proof in the case that $q$ is a square.
	
	{\bf Case 2.} Now assume that $q$ is non-square. 
	Using equation \eqref{eq-sum-minus1} we write the expression in \eqref{eqn-general-form-sum} as
	$$b_0\cdot 1+ c_0\cdot(-1)+\displaystyle\sum_{k=1}^{p-1}b_kw^k+\sum_{k=1}^{p-1}c_k(-w^k)=\sum_{k=1}^{p-1}(b_k-c_k-b_0+c_0)w^k.$$
	Let $a_k := b_k-c_k-b_0+c_0$ for $1\le k \le p-1$.
	
	Gauss proved that (see Chapter 6 in \cite{ireland}) 
	\begin{equation}\label{eqn-indep-sqrtq}
	\sum_{k =1}^{p-1}\left(\frac k p\right)w^{k}=\begin{cases}
	\sqrt{p} & \text{ if } p\equiv 1 \mod 4,  \\
	i\sqrt{p} & \text{ if } p\equiv 3 \mod 4.
	\end{cases} 
	\end{equation} 
By equations \eqref{eqn-general-form-sum} and \eqref{eqn-indep-sqrtq}  we have that 
\begin{equation}\label{eqn-1-3-mod4}
\sum_{k=1}^{p-1}a_kw^k=\begin{cases}
-\sqrt{q}(q-1)=\displaystyle\sum_{k =1}^{p-1}-\sqrt{q/p}\ (q-1)\left(\frac k p\right)w^{k} &\text{ if } q\equiv 1 \pmod 4,\\[20pt]
\;i\sqrt{q}(q-1)=\displaystyle\sum_{k =1}^{p-1}\sqrt{q/p}\ (q-1)\left(\frac k p\right)w^{k}&\text{ if } q\equiv 3 \pmod 4. \end{cases}
\end{equation}  
By the uniqueness of $\mathbb Q$-linear combinations of a linearly independent set  over 
$\mathbb Q$, by \eqref{eqn-1-3-mod4} we get
\begin{equation}\label{eqn-coefficients}
a_k =\begin{cases}
-\sqrt{q/p}(q-1)\left(\frac k p\right) &\text{ if } q\equiv 1 \mod 4,\\[20pt]
\sqrt{q/p}(q-1)\left(\frac k p\right)&\text{ if } q\equiv 3 \mod 4 \end{cases}
	\end{equation} for all $1\le k \le p-1$.

	Note that the equality  $\left(\frac{kn}{p}\right)=\left(\frac{k}{p}\right)\left(\frac{n}{p}\right)$ holds for all integers $k,n\ge 1$ which are relatively prime to $p$. 
	
	{\bf Case 2A.} If $q\equiv 1 \mod 4$ we have 
	\begin{align*}
	&-q^{-n/2}\left[\#C(\mathbb F_{q^n})  -(q^n+1)\right] \\
	&=\sum_{i=0}^{q(q-1)} \zeta_i^n\\
	&= b_0\cdot 1^n+ c_0\cdot(-1)^n+\displaystyle\sum_{k=1}^{p-1}b_k(w^k)^n+\sum_{k=1}^{p-1}c_k(-w^k)^n\\
	&= (-b_0+c_0)\displaystyle\sum_{k=1}^{p-1}w^{kn}+\displaystyle\sum_{k=1}^{p-1}b_k(w^k)^n+\sum_{k=1}^{p-1}c_k(-w^k)^n \\ 
	&=\sum_{k=1}^{p-1}a_kw^{kn} \\
	&=\sum_{k=1}^{p-1}\left(\frac{n}{p}\right)a_kw^{k}&&\text{by \eqref{eqn-coefficients}}\\
	&=-\left(\frac{n}{p}\right)\sqrt{q}(q-1). &&\text{by \eqref{eqn-1-3-mod4}}
	\end{align*}
	
	{\bf Case 2B.} If $q\equiv 3 \mod 4$ we have\begin{align*}\hspace{-1cm}
	&-q^{-n/2}\left[\#C(\mathbb F_{q^n})-(q^n+1)\right]\\
	&=\sum_{i=0}^{q(q-1)} \zeta_i^n\\
	&= i^n\left(b_0\cdot 1^n+ c_0\cdot(-1)^n+\displaystyle\sum_{k=1}^{p-1}b_k(w^k)^n+\sum_{k=1}^{p-1}c_k(-w^k)^n\right) \\
	&= i^n\left((-b_0+c_0)\displaystyle\sum_{k=1}^{p-1}w^{kn}+\displaystyle\sum_{k=1}^{p-1}b_k(w^k)^n+\sum_{k=1}^{p-1}c_k(-w^k)^n\right) \\ 
	&=i^n\sum_{k=1}^{p-1}a_kw^{kn} \\
	&=i^n\sum_{k=1}^{p-1}\left(\frac{n}{p}\right)a_kw^{k} &&\text{by  \eqref{eqn-coefficients}}\\&=i^{n+1}\left(\frac{n}{p}\right)\sqrt{q}(q-1). &&\text{by  \eqref{eqn-1-3-mod4}}
	\end{align*}
	
	In conclusion, we have 
	\begin{align*}
	\#C(\mathbb F_{q^n})-(q^n+1)&=-q^{rn}\sum_{k=0}^{p-1}a_kw^{kn}\\
	&=\begin{cases}
	(q-1)q^{\frac{n+1}2} & \text{ if } q \text{ is square}\\
	\left(\frac n p \right)(q-1)q^{\frac{n+1}2} & \text{ if } q \text{ is not square and } q\equiv 1 \pmod 4\\
	\left(\frac n p \right)(q-1)q^{\frac{n+1}2} & \text{ if } q\equiv 3 \pmod 4 \text{ and } n\equiv 1 \pmod 4\\
	-\left(\frac n p \right)(q-1)q^{\frac{n+1}2} & \text{ if } q\equiv 3 \pmod 4 \text{ and } n\equiv 3 \pmod 4\\
	\end{cases}\\
	\end{align*} 
\end{proof}

We have enough  information to calculate the L-polynomial explicitly, which we will do
by giving each root and its multiplicity.

\begin{cor}
Let $w$ be a primitive complex $p$-th root of unity, and let $i$ 
	be a primitive complex $4$-th root of unity. 
When $q$ is a square, the roots of the L-polynomial of $C$ (after division by $\sqrt q$) are 
\begin{itemize}
	\item $1$ with multiplicity $\left(\dfrac qp -\sqrt{q}\dfrac{p}{p-1}\right)\left( \dfrac{q-1}{2} \right)$,
	\item $-1$ with multiplicity $\left(\dfrac qp +\sqrt{q}\dfrac{p}{p-1}\right)\left( \dfrac{q-1}{2} \right)$,
	\item $w^k$ with multiplicity $\left(\dfrac qp +\dfrac{\sqrt q}{p}\right)\left( \dfrac{q-1}{2}\right)$,
	\item $-w^k$ with multiplicity $\left(\dfrac qp -\dfrac{\sqrt q}{p}\right)\left( \dfrac{q-1}{2}\right)$
\end{itemize}
for $1\le k \le p-1$.

When $q\equiv 1 \mod 4$ and  non-square, the  roots of the L-polynomial of $C$ (after division by $\sqrt q$) are 
\begin{itemize}
	\item $1$ with multiplicity $\left( \dfrac qp\right) \left(  \dfrac{q-1}{2} \right)$,
	\item $-1$ with multiplicity $\left( \dfrac qp \right) \left(  \dfrac{q-1}{2} \right)$,
	\item $w^k$ with multiplicity $\left(\dfrac qp -\left(\dfrac{k}{p}\right)\sqrt{\dfrac{ q}{p}}\right)\left(  \dfrac{q-1}{2}\right)$,
	\item $-w^k$ with multiplicity $\left(\dfrac qp +\left(\dfrac{k}{p}\right)\sqrt{\dfrac{ q}{p}}\right)\left(  \dfrac{q-1}{2}\right)$
\end{itemize}
for $1\le k \le p-1$.

When $q\equiv 3 \mod 4$, the  roots of the L-polynomial of $C$ (after division by $\sqrt q$) are 
\begin{itemize}
	\item $i$ with multiplicity $ \left( \dfrac qp \right) \left(  \dfrac{q-1}{2} \right)$,
	\item $-i$ with multiplicity $ \left( \dfrac qp \right) \left(  \dfrac{q-1}{2} \right)$,
	\item $iw^k$ with multiplicity $\left(\dfrac qp +\left(\dfrac{k}{p}\right)\sqrt{\dfrac{ q}{p}}\right) \left( \dfrac{q-1}{2}\right)$,
	\item $-iw^k$ with multiplicity $\left(\dfrac qp -\left(\dfrac{k}{p}\right)\sqrt{\dfrac{ q}{p}}\right) \left(  \dfrac{q-1}{2}\right)$
\end{itemize}
for $1\le k \le p-1$.
\end{cor}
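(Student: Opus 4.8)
The plan is to extract the roots and their multiplicities directly from the computations already carried out for Theorem~\ref{mainL}, together with two further elementary point counts. Since $C$ is supersingular, its L-polynomial has the form $\prod_j(1-\eta_j T)$ with $\eta_j=\sqrt q\,\zeta_j$, and the proofs of Lemma~\ref{max-min} and Theorem~\ref{mainL} (see~\eqref{eqn-in-roots}) show that each $\zeta_j$ lies in $\{\pm w^k:0\le k\le p-1\}$ when $q\equiv 1\pmod 4$ and in $\{\pm i w^k:0\le k\le p-1\}$ when $q\equiv 3\pmod 4$. Since $L_C$ has real coefficients its multiset of roots is stable under complex conjugation, so the quantities listed in the corollary (the roots of $L_C$ rescaled by $\sqrt q$) are exactly these $\zeta_j$, and what must be determined is the multiplicity of each. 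Write $b_k$ and $c_k$ for the multiplicities of $w^k$ and $-w^k$ (resp.\ $iw^k$ and $-iw^k$) among the $\zeta_j$; these $2p$ nonnegative integers are the unknowns, and I will pin them down with a system of $2p$ linear equations.

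All equations arise by expanding a power sum $\sum_j\zeta_j^n$ and invoking, as in Theorem~\ref{mainL}, that $\{w,\dots,w^{p-1}\}$ is linearly independent over $\mathbb Q$ while $1+w+\cdots+w^{p-1}=0$. (i) Since $\deg L_C=2g=q(q-1)$, the total is $\sum_{k=0}^{p-1}(b_k+c_k)=q(q-1)$. (ii) The case $n=1$, \ie equation~\eqref{sumn1}, was already processed in the proof of Theorem~\ref{mainL} and gives, for $1\le k\le p-1$, a closed form for $b_k-c_k-(b_0-c_0)$: it equals $\sqrt q\,(q-1)$ when $q$ is a square, and $\mp\sqrt{q/p}\,(q-1)\big(\tfrac kp\big)$ (sign according to $q\bmod 4$) when $q$ is a nonsquare; this is $p-1$ equations. (iii) Because $(2,2p)=2$ forces $\#C(\mathbb F_{q^2})=q^2+1$ by Theorem~\ref{mainL}, we get $\sum_j\zeta_j^2=0$; squaring sends $\pm w^k\mapsto w^{2k}$ and $\pm i w^k\mapsto -w^{2k}$, and since $k\mapsto 2k$ permutes $\{1,\dots,p-1\}$ modulo $p$, linear independence forces $b_k+c_k=b_0+c_0$ for all $k$ — another $p-1$ equations. (iv) Because $(p,2p)=p$ forces $\#C(\mathbb F_{q^p})=q^p+1$, we get $\sum_j\zeta_j^p=0$; raising to the $p$-th power sends every $\pm w^k\mapsto\pm 1$ and every $\pm i w^k\mapsto\mp i$, so this reads $\sum_{k=0}^{p-1}b_k=\sum_{k=0}^{p-1}c_k$. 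Altogether $1+(p-1)+(p-1)+1=2p$ equations.

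Solving them is routine. Combining (i) and (iii) gives $b_k+c_k=q(q-1)/p$ for every $k$; combining (iv) with the difference relations of (ii) pins down $b_0-c_0$, which comes out to $0$ in the two nonsquare cases (here one uses $\sum_{k=1}^{p-1}\big(\tfrac kp\big)=0$) and to $-(p-1)\sqrt q\,(q-1)/p$ in the square case. Then $b_k=\tfrac12\big((b_k+c_k)+(b_k-c_k)\big)$ and $c_k=\tfrac12\big((b_k+c_k)-(b_k-c_k)\big)$ produce the four displayed multiplicities once the factor $(q-1)/2$ is pulled out; in the case $q\equiv 3\pmod 4$ one must additionally carry along the $i$ coming from $\zeta=\pm i w^k$. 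I would close with a check that each multiplicity is a nonnegative integer — for instance, in the square case $q/p\ge(p-1)\sqrt q/p$ since $q=p^r$ with $r$ even forces $\sqrt q\ge p$ — confirming that no root has been gained or lost.

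The step I expect to be the real point is recognizing that the two "degenerate" cases $n=2$ and $n=p$ of Theorem~\ref{mainL} are informative at all: the bare identities $\sum_j\zeta_j^2=0$ and $\sum_j\zeta_j^p=0$ become genuine linear constraints only after being pushed through the $\mathbb Q$-linear independence of the relevant roots of unity, and without them the difference relations of (ii) leave both the split of each $b_k+c_k$ into $b_k$ and $c_k$ and the common value of $b_k+c_k$ undetermined. Everything else is bookkeeping, the only real nuisances being the uniform handling of the three cases and, when $q\equiv 3\pmod 4$, tracking the powers of $i$.
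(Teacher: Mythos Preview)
Your approach is correct and is exactly what the paper sketches: it too sets up $2p$ linear equations for the $2p$ unknown multiplicities, coming from the total degree $2g$, from equation~\eqref{eqn-general-form-sum} (the $n=1$ count), and from its analogues over $\F_{q^2}$ and $\F_{q^p}$. You have simply supplied the details the paper omits, including the observation that the ``degenerate'' counts at $n=2$ and $n=p$ become genuine constraints after expanding in the $\mathbb Q$-basis $\{w,\dots,w^{p-1}\}$.

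One small caution: when you say the solution ``produces the four displayed multiplicities,'' note that carrying your computation through in the square case actually gives $b_0-c_0=-\dfrac{(p-1)\sqrt{q}(q-1)}{p}$, so the multiplicities of $\pm 1$ come out to $\left(\dfrac{q}{p}\mp\sqrt{q}\,\dfrac{p-1}{p}\right)\dfrac{q-1}{2}$; the factor $\dfrac{p}{p-1}$ printed in the statement appears to be a typographical inversion (for $p=3$, $q=9$ it would give a negative multiplicity). Your method is fine; just don't blindly match to the printed formulae in that case.
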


\begin{proof}
One can find the multiplicities of the $\zeta_j$ using the proof of Theorem \ref{mainL}.
There are $2p$ linear equations for $2p$ unknowns (the multiplicities) coming from equation 
\eqref{eqn-general-form-sum} and its analogues over $\F_{q^2}$ and $\F_{q^p}$,
and the fact that the sum of the multiplicities is $2g$.
\end{proof}

\section{A Remark for $F_q(n,t_1,t_2)$ and the Final Result}\label{t1t2}

 We  find $F_q(n,t_1,t_2)$ for all $t_1,t_2 \in \mathbb F_q$ from $F_q(n,0,0)$ in the same way as ~\cite{kuzmin2}. The general case where $t_1,t_2$ are arbitrary and $p > 2$ reduces to $t_1=t_2=0$. Indeed, if $(p, n) = 1$, then the transformation $f(x) \to f(x - a~/n)$ shows that $F_q(n,t_1, t_2)= F_q(n,0, t_2-((n-1)/2n)t_1)$. If $p\mid n$ and $a_1\ne 0$, then a linear transformation of the unknown $x$ gives $F_q(n,t_1, t_2)=F_q(n,0, 1)$. Since $Q(x)=\Tr(x^{q+1}-x^2)$ is a quadratic form over $\mathbb F_q$, in order to find $|\{ x \in \mathbb F_{q^n} \: | Q(x)=t_2\}|$ for all $t_2 \in \mathbb F_q$ and so $F_q(n,0,t_2)$ for all $t_2 \in \mathbb F_q$ it is enough to find $|\{ x \in \mathbb F_{q^n} \: | Q(x)=0\}|$.

The final result for $F_q(n,0,0)$, originally proved by Kuzmin \cite{kuzmin1} by different methods,  is as follows.

\begin{thm} Let $q=p^r$. Then we have
	$$F_q(n,0,0)-q^{n-2}=\begin{cases}
	0 &\text{if $(n,2p)=2$ or $p$},\\
	(-1)^{(n-1)/2}(q-1)q^{\frac n2-1}  &\text{if $(n,2p)=2p$},\\
	\left(\dfrac{-n}{p}\right)^r (q-1) q^{(n-3)/2} & \text{if $(n,2p)=1$}
	\end{cases}$$
	where $\left(\dfrac{-n}{p}\right)$ is the Legendre symbol.
\end{thm}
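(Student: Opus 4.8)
The plan is to obtain this as an essentially immediate corollary of Theorem \ref{mainL}, since the substantive work---computing the $L$-polynomial of $C$---is already done. The only extra ingredient needed is the point-count dictionary between $F_q(n,0,0)$ and $\#C(\mathbb F_{q^n})$ established in Section \ref{sectrel} and at the start of Section \ref{sectzeta}.

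Concretely, write $N=|\{x\in\mathbb F_{q^n}:T_1(x^{q+1}-x^2)=0\}|$. By Proposition \ref{prop-relation} we have $F_q(n,0,0)=N/q$, and by equation \eqref{quadpts} we have $\#C(\mathbb F_{q^n})=qN+1$. Eliminating $N$ gives the clean relation
\[
F_q(n,0,0)=\frac{\#C(\mathbb F_{q^n})-1}{q^2},\qquad\text{hence}\qquad F_q(n,0,0)-q^{n-2}=\frac{\#C(\mathbb F_{q^n})-(q^n+1)}{q^2},
\]
using $q^{n-2}=(q^n+1-1)/q^2$. So the content of the theorem is precisely the value of $\#C(\mathbb F_{q^n})-(q^n+1)$ divided by $q^2$, which is exactly what Theorem \ref{mainL} computes.

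From here I would simply substitute the three cases of Theorem \ref{mainL} and divide by $q^2$: the case $(n,2p)=2$ or $p$ gives $0$; the case $(n,2p)=2p$ gives $(-1)^{(n-1)/2}(q-1)q^{n/2+1}/q^2=(-1)^{(n-1)/2}(q-1)q^{n/2-1}$; and the case $(n,2p)=1$ gives $\left(\frac{-n}{p}\right)^r(q-1)q^{(n+1)/2}/q^2=\left(\frac{-n}{p}\right)^r(q-1)q^{(n-3)/2}$, which is the asserted formula. There is essentially no obstacle remaining: the difficulty was entirely absorbed into Theorem \ref{mainL} (and, behind it, Proposition \ref{Szero} and Lemma \ref{max-min}). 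The only care needed is bookkeeping---keeping track of the factor $q$ from the $q$-to-$1$ map $x\mapsto x^q-x$ together with the further factor $q$ relating $N$ to $\#C(\mathbb F_{q^n})$, so that the total divisor is $q^2$, and checking that the exponents of $q$ come out as stated.
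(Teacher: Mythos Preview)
Your proposal is correct and is exactly the (implicit) argument the paper has in mind: the theorem is stated without a separate proof because it follows immediately from Theorem~\ref{mainL} via the relation $F_q(n,0,0)=(\#C(\mathbb F_{q^n})-1)/q^2$ obtained from Proposition~\ref{prop-relation} and equation~\eqref{quadpts}. Your bookkeeping with the two factors of $q$ and the exponent shifts is accurate in each case.
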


\section{A Remark for Three Coefficients Fixed}

Having studied two coefficients, one may reasonably ask for the number of irreducible polynomials with the first three (or more) coefficients fixed. 
In \cite{three-coef} we (and our co-authors) found $F_q(n,0,0,0)$ for $q$ even by similar methods. Now suppose $q$ is odd. Using the same method as in ~\cite{three-coef} we get 
$$F_q(n,0,0,0)=\frac1q \left|\left\{ x \in \mathbb F_{q^n}  \: | \: \Tr_1(x^{q+2}-x^{2})=\Tr_1(x^{2q+1}-x^{q+2})=0\right\}\right|.$$In order to find this number we have to find the number of rational points of the curves $$C_1: y^q-y=x^{q+1}-x^2,$$$$C_2:y^q-y=x^{2q+1}-x^{q+2},$$$$C_3: y^q-y=x^{2q+1}-x^{q+2}+x^{q+1}-x^2$$ over $\mathbb F_{q^n}$. When $q$ is even, these curves are all supersingular (see \cite{three-coef}). 
Interestingly this does not happen for $q$ odd, as we will illustrate with an example. 

Let $q=3$. Then we have $C_1: y^3-y=x^4-x^2$, $C_2: y^3-y=x^7-x^5$ and $C_3: y^3-y=x^7-x^5+x^4-x^2$.  Using MAGMA \cite{magma} we calculated that the $L$-polynomial of $C_1$ is $$L_1(x)=(3x^2+1)(3x^2+3x+1)^2$$the $L$-polynomial of $C_2$ is $$L_2(x)=(27x^6+27x^5+27x^4+15x^3+9x^2+3x+1)^2$$ and the $L$-polynomial of $C_3$ is 
$$L_3(x)=
729x^{12}+1458x^{11}+1458x^{10}+1053x^9+576x^8+
243x^7+117x^6+81x^5+63x^4+$$   $$39x^3+18x^2+6x+1.$$ The curves $C_2$ and $C_3$ are not supersingular over $\mathbb F_3$ by the fourth property in Section \ref{supsing}.

Let $\alpha_1,\cdots,\alpha_6$ be the roots of $27x^6+27x^5+27x^4+15x^3+9x^2+3x+1$ and $\beta_1,\cdots,\beta_{12}$ be the roots of $L_3(x)$. Then we have $$F_3(n,0,0,0)=3^{n-3}+$$
$$3^{n/2-3}
\left(i^n+(-i)^n+2\left(\frac{-\sqrt3+i}{2}\right)^{n}+2\left(\frac{-\sqrt3-i}{2}\right)^{n}+2\sum_{j=1}^6 \alpha_j^{n}+(q-1)\sum_{j=1}^{12}\beta_j^n\right).$$

The appearance of the $\alpha_j$ and $\beta_j$ (which are not roots of unity) in the formula means that this formula has a fundamentally different flavour to the formula in the case of two coefficients. This explains the difficulty in extending Kuz'min's result from two to three coefficients.
These matters are studied to a much greater extent in \cite{higher-no-coef}.

\section*{Acknowledgements} 
We thank Robert Granger for  helpful conversations. 
\bibliographystyle{unsrt}		
\bibliography{supersingular}									
\end{document}